\documentclass[11pt,a4paper,reqno]{amsart}

\usepackage[T1]{fontenc}
\usepackage[english]{babel}
\usepackage{mathtools}
\usepackage{amssymb}
\usepackage{libertinus}
\usepackage[cal=boondoxo,bb=ams]{mathalfa}
\usepackage{microtype}
\usepackage{enumitem}
\usepackage{array,booktabs,tabularx}
\usepackage{etoolbox}

\allowdisplaybreaks[2]
\usepackage[
 a4paper,
 left=30mm,
 right=30mm,
 top=27mm,
 bottom=30mm,
 headsep=8mm,
 footskip=13mm,
 heightrounded
]{geometry}

\setlist[itemize]{leftmargin=2.1em,itemsep=0.25em,topsep=0.45em,parsep=0pt}
\setlist[enumerate]{leftmargin=2.1em,itemsep=0.25em,topsep=0.45em,parsep=0pt}

\numberwithin{equation}{section}

\makeatletter
\def\@settitle{%
 \begin{center}
  \vspace*{-0.4em}
  {\normalfont\bfseries\fontsize{17}{21}\selectfont\@title\par}
  \vspace{0.7em}
 \end{center}}
\renewcommand\section{\@startsection{section}{1}{\z@}%
 {1.5\baselineskip plus 0.2\baselineskip minus 0.1\baselineskip}%
 {0.65\baselineskip}{\normalfont\Large\bfseries}}
\renewcommand\subsection{\@startsection{subsection}{2}{\z@}%
 {1.15\baselineskip plus 0.2\baselineskip minus 0.1\baselineskip}%
 {0.45\baselineskip}{\normalfont\large\bfseries}}
\renewcommand\subsubsection{\@startsection{subsubsection}{3}{\z@}%
 {0.9\baselineskip plus 0.15\baselineskip minus 0.1\baselineskip}%
 {0.35\baselineskip}{\normalfont\normalsize\bfseries}}
\makeatother

\makeatletter
\patchcmd{\@setauthors}{\centering\footnotesize}{\centering\normalsize}{}{}
\patchcmd{\@setauthors}{\MakeUppercase{\authors}}{\authors}{}{}
\patchcmd{\maketitle}{\uppercasenonmath\shorttitle}{}{}{}
\patchcmd{\maketitle}{\@nx\MakeUppercase{\the\toks@}}{\the\toks@}{}{}
\makeatother

\usepackage{hyperref}
\hypersetup{hidelinks,pdfencoding=auto}
\usepackage[nameinlink,capitalise,noabbrev]{cleveref}

\theoremstyle{plain}
\newtheorem{theorem}{Theorem}[section]
\newtheorem{lemma}[theorem]{Lemma}
\newtheorem{prop}[theorem]{Proposition}
\newtheorem{coro}[theorem]{Corollary}

\theoremstyle{definition}

\theoremstyle{remark}
\newtheorem{remark}[theorem]{Remark}

\newcommand{\ml}{\mathcal}
\newcommand{\mb}{\mathbb}
\newcommand{\dd}{\,\mathrm{d}}
\newcommand{\lin}{\mathrm{lin}}
\newcommand{\reg}{\mathrm{reg}}

\newcommand{\high}{\mathrm{high}}

\title[Critical exponent for strongly damped waves]{The critical exponent for the two-dimensional semilinear wave equation with strong damping}

\author[W. Chen]{Wenhui Chen}
\address{School of Mathematics and Information Science, Guangzhou University, Guangzhou 510006, P. R. China}
\email{wenhui.chen.math@gmail.com}

\keywords{semilinear wave equation, strong damping, critical exponent, finite-time blow-up}
\subjclass[2020]{Primary 35B33; Secondary 35L71, 35B44}
\date{}

\begin{document}

\begin{abstract}
	In this manuscript, we determine the critical exponent for the two-dimensional semilinear wave equation with strong damping. A recent result in D'Abbicco (arXiv, 2026) gives global in-time small data solutions for $p>\frac{10}{3}$, whereas we in the paper prove finite-time blow-up for $2<p\leqslant\frac{10}{3}$ under an averaged sign condition on the initial velocity. Together with the previously known blow-up result for $1<p\leqslant3$, the threshold for the power nonlinearity $|u|^p$ is
	\begin{align*}
		p=p_{\mathrm{crit}}=\frac{10}{3}.
	\end{align*}
	We prove positivity of the full two-dimensional velocity fundamental solution and construct a positive averaged kernel, which allows us to establish a nonlinear lower-bound iteration in a parabolic region.
\end{abstract}

\maketitle

\section{Introduction}

This manuscript determines the critical exponent for the following two-dimensional Cauchy problem:
\begin{align}\label{Main-Problem}
	\begin{cases}
		u_{tt}-\Delta u-\Delta u_t=|u|^p,&x\in\mb{R}^2,\ t>0,\\
		(u,u_t)(0,x)=(u_0,u_1)(x),&x\in\mb{R}^2,
	\end{cases}
\end{align}
with the power exponent $p>1$. The first systematic study, to the best of the author's knowledge, of \eqref{Main-Problem} established small data global in-time existence for $p>4$ and finite-time blow-up for $1<p\leqslant3$ in \cite[Theorem~2 and Theorem~9]{DAbbicco-Reissig=2014}. The determination of the critical exponent was explicitly left open in \cite[Remark~10]{DAbbicco-Reissig=2014} and was subsequently reiterated as an open problem in \cite{Kainane-Kainane-Reissig=2020,Jleli-Samet-Vetro=2021,Fino-Hamza=2022,Kirane-Fino-Kerbal-Laadhari=2024,DAbbicco-Lagioia=2025,Chen-Girardi=2025,Duong-Dao-Reissig=2025,DAbbicco=2026}. The three-dimensional case was recently settled in \cite{Chen-StronglyDamped-3D=2026}, whereas the two-dimensional case remained open. More recently, \cite{DAbbicco=2026} proved global in-time small data solutions for $p>\frac{10}{3}$ for a broad class of semilinear wave equations with damped oscillations, including the two-dimensional strongly damped wave equation \eqref{Main-Problem} with $u_0\equiv0$. Thus, the remaining open range is
\begin{align}\label{Open-range}
	3<p\leqslant\frac{10}{3}.
\end{align}
In the present paper, we establish finite-time blow-up for $2<p\leqslant\frac{10}{3}$, which, together with the known result for $1<p\leqslant3$, yields $p_{\mathrm{crit}}=\frac{10}{3}$. Combined with the three-dimensional result in \cite{Chen-StronglyDamped-3D=2026}, the critical exponent for the semilinear strongly damped wave equation can be written as
\begin{align*}
	\widetilde{p}_{\mathrm{crit}}(n)=\frac{n+1}{n-1}+\frac{1}{3}\ \ \mbox{for}\ \ n\in\{2,3\}.
\end{align*}
See also \cite[$\theta_0=2$ in the equation (15)]{DAbbicco=2026}.

The main difficulty comes from the mixed hyperbolic-parabolic character of the strongly damped wave equation. Indeed, the damping term $-\Delta u_t$ destroys the finite propagation property of the classical wave equation, while the low-frequency part retains wave oscillations broadened by diffusion. The three-dimensional argument in \cite{Chen-StronglyDamped-3D=2026} uses the positivity of the velocity fundamental solution and a dimension-descent identity to obtain a positive kernel on the half-line. This exact reduction is special to three dimensions and is no longer available in the present setting for $n=2$, especially, the weaker pointwise decay of the two-dimensional moving front in the nonlinear iteration.

Several aspects of the linear problem associated with \eqref{Main-Problem} have been studied extensively. Optimal growth/decay estimates, diffusion phenomena, and large-time asymptotic profiles were obtained in \cite{Shibata=2000,Ikehata-Todorova-Yordanov=2013,DAbbicco-Ebert=2014,Ikehata=2014,Ikehata-Onodera=2017,Michihisa=2021,Ikehata-Takeda=2026}. More general semilinear problems for structurally damped wave and $\sigma$-evolution equations can be found in \cite{Pham-Kainane-Reissig=2015,DAbbicco-Ebert=2017,Ikehata-Takeda=2017,Dao-Reissig=2019,Kainane-Kainane-Reissig=2020} and the references therein. The critical exponent for strongly non-effective $\sigma$-evolution equations was determined in \cite{DAbbicco-Ebert=2022} away from the wave endpoint $\sigma=1$, while related nonexistence results for the time-dependent coefficients' cases were established in \cite{Jleli-Samet-Vetro=2021,Fino-Hamza=2022,Kirane-Fino-Kerbal-Laadhari=2024}.

We first prove the positivity of the full two-dimensional velocity fundamental solution. Circular averaging then produces a positive kernel $D(t,r,\rho)$ and the lower-bound inequality
\begin{align*}
	w(t,r)\geqslant W_{\lin}(t,r)
	+\int_0^t\int_0^\infty
	D(t-s,r,\rho)\rho^{\frac{1-p}{2}}|w(s,\rho)|^p\dd\rho\dd s,
\end{align*}
where $w(t,r):=r^{\frac12}\ml M[u(t,\cdot)](r)$ is defined in \eqref{Defn-ml-M}. In the subcritical case $2<p<\frac{10}{3}$, a finite iteration in a parabolic characteristic region, followed by a comparison with a scalar ODE in a strict interior region, gives finite-time blow-up. At the critical exponent $p=\frac{10}{3}$, a refined iteration on moving shells exploits the borderline logarithmic gain and leads again to finite-time blow-up.

\medskip
\paragraph{Notation.}
Throughout this paper, the constants $C$ and $c$ are positive and may change from line to line. We write $f\lesssim g$ if $f\leqslant Cg$, $f\gtrsim g$ if $g\lesssim f$, and $f\approx g$ if both $f\lesssim g$ and $g\lesssim f$ hold. A subscript attached to these symbols indicates the parameters on which the implicit constant is allowed to depend. The underlying spatial domain is always $\mb R^2$ whenever no domain is displayed explicitly. The Fourier transform and its inverse are defined, respectively, via
\begin{align*}
	\widehat f(\zeta):=\ml F[f](\zeta)&:=\int_{\mb R^n}\mathrm{e}^{-ix\cdot\zeta}f(x)\dd x,\\
	\ml F^{-1}[g](x)&:=\frac{1}{(2\pi)^n}\int_{\mb R^n}\mathrm{e}^{ix\cdot\zeta}g(\zeta)\dd\zeta.
\end{align*}
For a function $h=h(t)$, its Laplace transform is written as
\begin{align*}
	\widetilde h(s):=\ml L[h](s):=\int_0^\infty\mathrm{e}^{-st}h(t)\dd t.
\end{align*}
We denote by $B_R:=\{x\in\mb R^2:|x|<R\}$ the open ball centered at the origin with radius $R>0$. For a function $f=f(x)$ on $\mb R^2$, its circular mean is defined by
\begin{align}\label{Defn-ml-M}
	\ml M[f](r):=\frac{1}{2\pi}\int_0^{2\pi}f(r\omega_\theta)\dd\theta\ \ \mbox{for}\ \ r\geqslant0,
\end{align}
where $\omega_\theta:=(\cos\theta,\sin\theta)$.

\section{Main results}\label{Section-Main}

Let $S_0(t)$ and $S_1(t)$ denote the position $u|_{t=0}$ and velocity $u_t|_{t=0}$ propagators for the homogeneous equation associated with \eqref{Main-Problem}, respectively. Thus, the solution to the homogeneous problem with initial data $(v_0,v_1)$ is given by $S_0(t)v_0+S_1(t)v_1$.

For $T\in(0,\infty]$, a function
\begin{align*}
	u\in\ml C\bigl([0,T),H^2\bigr)\cap\ml C^1\bigl([0,T),L^2\bigr)
\end{align*}
is called a mild Sobolev solution to \eqref{Main-Problem} on $[0,T)$ provided that
\begin{align*}
	u(t,\cdot)=S_0(t)u_0(\cdot)+S_1(t)u_1(\cdot)+\int_0^tS_1(t-s)|u(s,\cdot)|^p\dd s\ \ \mbox{in}\ \ H^2
\end{align*}
holds for every $t\in[0,T)$. The standard local in-time well-posedness argument yields, for $p>2$ and smooth compactly supported initial data, a unique maximal mild Sobolev solution on $[0,T_{\max})$, where $T_{\max}\in(0,\infty]$. For $p\leqslant \frac{10}{3}$ additionally, we have the next blow-up result.

\begin{theorem}[Finite-time blow-up]\label{Thm-Blowup}
	Let $2<p\leqslant p_{\mathrm{crit}}=\frac{10}{3}$. Assume that $u_0,u_1\in\ml C_0^\infty$ satisfy
	\begin{align}\label{Positive-Mean}
		M_1:=\int_{\mb{R}^2}u_1(x)\dd x>0.
	\end{align}
	Then the maximal mild Sobolev solution to \eqref{Main-Problem} satisfies $T_{\max}<\infty$.
\end{theorem}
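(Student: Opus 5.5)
We argue by contradiction, assuming $T_{\max}=\infty$, and follow the strategy outlined in the introduction.

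\textbf{Step 1: positivity of the velocity kernel.} Write $S_1(t)v=K_1(t,\cdot)*v$, where
\begin{align*}
	\widehat K_1(t,\zeta)=\frac{\mathrm e^{\lambda_+ t}-\mathrm e^{\lambda_- t}}{\lambda_+-\lambda_-},\qquad \lambda_\pm=\frac{-|\zeta|^2\pm\sqrt{|\zeta|^4-4|\zeta|^2}}{2}.
\end{align*}
We will show that $K_1(t,x)\geqslant0$. The plan is to use the Laplace transform in $t$: we have $\widetilde K_1(s,\zeta)=(s^2+(1+s)|\zeta|^2)^{-1}=(1+s)^{-1}\bigl(\tfrac{s^2}{1+s}+|\zeta|^2\bigr)^{-1}$. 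In $x$, the second factor is the two-dimensional resolvent kernel $\frac{1}{2\pi}K_0\bigl(s|x|/\sqrt{1+s}\bigr)$. We then aim to write $K_1$ as a superposition of two-dimensional heat kernels against a positive measure in time. This amounts to showing that $s\mapsto s^2/(1+s)$ is a Bernstein-type function composed appropriately, i.e.\ a subordination argument. Since $s^2/(1+s)=s-1+\frac{1}{1+s}$, we check complete monotonicity of $(1+s)^{-1}\mathrm e^{-\tau s^2/(1+s)}$ in $s$. This positivity is the main structural input.

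\textbf{Step 2: radial reduction and lower bound.} Since $K_1\geqslant0$ and $|u|^p\geqslant0$, circular averaging of the mild formulation gives a positive kernel $D(t,r,\rho)$, obtained by averaging $K_1$ over circles, and the integral inequality for $w=r^{1/2}\mathcal M[u]$ stated in the introduction. The linear part satisfies $W_{\lin}(t,r)\gtrsim M_1 r^{1/2}t^{-1/2}(1+t)^{-1/2}\cdots$ in a parabolic characteristic region $\{|r-t|\lesssim\sqrt{t}\}$ for $t\geqslant T_0$. The term with $u_0$ decays faster and is absorbed. This uses the asymptotic profile of $K_1$, namely the wave front $\frac{1}{2\pi}(t^2-|x|^2)_+^{-1/2}$ mollified by the Gaussian $\mathrm e^{-|x|^2/(2t)}$ at scale $\sqrt t$.

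\textbf{Step 3: iteration.} We plug $w\geqslant W_{\lin}$ into the Duhamel term, using a lower bound $D(t-s,r,\rho)\gtrsim (t-s)^{-1/2}$ for $|r-\rho-(t-s)|\lesssim\sqrt{t-s}$. For the subcritical case $2<p<\frac{10}{3}$, each iteration gains a factor $t^{a}$ with $a>0$, precisely because $\frac{1}{2}\bigl(1-\frac{p-1}{2}\bigr)$ and the shell width $\sqrt t$ combine to $\frac{10}{3}-p>0$. Finitely many steps then give $w\gtrsim t^{N}$ on an interior subregion. Comparison with the ODE $F''\gtrsim t^{-b}F^p$ for the average $F(t)=\int w\,\psi$ over a strict interior region then yields blow-up. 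In the critical case we instead iterate on shells $\{t\in[\ell_j,\infty)\}$ with $\ell_j\uparrow$ finite limit, producing $w\geqslant C_j(\log t)^{a_j}$ with $a_j\sim p^j$ and $C_j\geqslant \exp(-Cp^j)$, which is the standard slicing argument; letting $j\to\infty$ contradicts finiteness for $t$ large.

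\textbf{Main obstacle.} The hardest step is Step 1 together with the quantitative lower bound on $D$ in the parabolic region. Without three-dimensional descent, the two-dimensional wave kernel has a $(t^2-r^2)^{-1/2}$ singularity and a tail inside the cone, so both positivity and the sharp $\sqrt t$ width must be obtained from the subordination representation. A secondary obstacle is the delicate log bookkeeping at $p=\frac{10}{3}$.
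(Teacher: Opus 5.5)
Your proposal has genuine gaps: the positivity argument in Step~1 fails as stated, and the quantitative iteration in Step~3 does not reach $\frac{10}{3}$ or close the critical case.

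\textbf{Step~1.} This step rests on a complete-monotonicity claim that is false. The function $f(s)=\frac{s^2}{1+s}$ is not Bernstein: $f''(s)=2(1+s)^{-3}>0$, so $f$ is convex, and $f'(s)=1-(1+s)^{-2}$ is increasing rather than completely monotone. More decisively, $g_\tau(s)=(1+s)^{-1}\mathrm{e}^{-\tau s^2/(1+s)}$ satisfies $(\log g_\tau)''(s)=\frac{1+s-2\tau}{(1+s)^3}$. This is negative near $s=0$ once $\tau>\frac12$, while every completely monotone function is log-convex (Cauchy--Schwarz on its Laplace representation). So $g_\tau$ is not the Laplace transform of a positive measure, and heat-kernel subordination cannot give $K_1\geqslant0$.

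The paper subordinates one level up, through the square root of your symbol. The function $\psi(s)=\frac{s}{\sqrt{1+s}}$ is Bernstein, since $\psi'(s)=\frac12(1+s)^{-1/2}+\frac12(1+s)^{-3/2}$ is completely monotone. Also $K_0(z)=\int_0^\infty\mathrm{e}^{-z\cosh\vartheta}\dd\vartheta$ is completely monotone. Hence $s\mapsto(1+s)^{-1}K_0(r\psi(s))$ is completely monotone for each fixed $r>0$. You then still have to identify the pointwise kernel $t\mapsto G_1^{(2)}(t,r)$ with the positive measure given by Bernstein's theorem. The paper does this via continuity in $t$, local integrability, and uniqueness of Laplace transforms; your sketch omits it.

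\textbf{Step~3, subcritical case.} Your kernel bound $D(t-s,r,\rho)\gtrsim(t-s)^{-1/2}$ is too weak for $\frac{10}{3}$, and the count ``$\frac{10}{3}-p$'' does not follow from it.
\begin{itemize}
\item On the front $r=t-\sigma\sqrt t$ one has $W_{\lin}\approx M_1\Phi(\sigma)t^{-1/4}$.
\item Near $p=\frac{10}{3}$ the decisive sources sit at $s\approx\rho\approx\sqrt t$.
\item There the angular Jacobian $4\sqrt{r\rho}\,\ell\,[(b^2-\ell^2)(\ell^2-a^2)]^{-1/2}$ is $\gtrsim\tau^{1/4}$ on a front interval of length $\approx\sqrt\tau$ where $G_1^{(2)}(\tau,\ell)\gtrsim\tau^{-3/4}$. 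Hence $D\gtrsim1$.
\item This gives $w(t,r)\gtrsim t^{1-\frac{3p}{8}}$, a gain over $t^{-1/4}$ exactly when $p<\frac{10}{3}$.
\item With your bound the same sources give only $t^{\frac12-\frac{3p}{8}}$, which never beats $t^{-1/4}$ for $p>2$.
\end{itemize}

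\textbf{Step~3, critical case.} Letting $j\to\infty$ cannot produce a contradiction.
\begin{itemize}
\item The logarithmic accumulation comes only from $s\leqslant\kappa\sqrt t$, where $\rho\lesssim\sqrt\tau$ and $D\gtrsim t^{-1/4}s^{1/2}$. With $A(t)=t^{1/4}\inf_{[t-\frac98\sqrt t,\,t-\sqrt t]}w$, what you can derive is $A(t)\geqslant a_0+c\int_{T_0}^{\kappa\sqrt t}A(s)^{10/3}\frac{\dd s}{s}$.
\item The right-hand side only sees $A$ on $[T_0,\kappa\sqrt t]$. So the equality case is solvable step by step for all $t$, with growth like $\exp(c(\log t)^\theta)$, $\theta=\log_2\frac{10}{3}$.
\item Correspondingly, in your slicing the factor $\log(\kappa\sqrt t)\approx\frac12\log t$ costs $2^{-a_{j+1}}$ per step. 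This gives $C_j\approx\exp(-Cjp^j)$, and $C_j(\log t)^{a_j}\to0$ as $j\to\infty$ for fixed $t$.
\end{itemize}
The paper needs an extra mechanism here. It propagates the super-polynomial shell growth into the interior $\frac25t\leqslant r\leqslant\frac34t$, where $D\gtrsim t^{-1/4}$. It then uses the mass identity $\int G_1^{(2)}(\tau,x)\dd x=\tau$ and a second-moment Chebyshev bound to get $\int_{J_T}D(\tau,r,\rho)\dd\rho\gtrsim\tau$ for $\tau\leqslant\kappa_0T$. Finally it runs a short-window Volterra/ODE comparison whose blow-up time is $\lesssim T^{\frac{p-1}{4}}A_T^{-\frac{p-1}{2}}=o(T)$. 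Your subcritical step ``$F''\gtrsim t^{-b}F^p$'' is an unspecified version of this same argument and requires the same kernel-mass input.
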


\begin{remark}\label{Remark-Lower-Range}
	For $1<p\leqslant3$, finite-time blow-up for suitable nonnegative compactly supported data was established in \cite[Theorem~9]{DAbbicco-Reissig=2014}. The assumptions of that result are preserved under positive rescaling, so finite-time blow-up also occurs for arbitrarily small smooth compactly supported data in this range. Theorem~\ref{Thm-Blowup} closes the remaining interval \eqref{Open-range}.
\end{remark}

The preceding theorem, together with the global in-time existence result in \cite[Example~1 and Theorem~4]{DAbbicco=2026} for $p>\frac{10}{3}$, yields the exact threshold in two dimensions.

\begin{coro}[Critical exponent]\label{Coro-Critical}
	In the class of velocity data $u_0\equiv0$, the critical exponent for \eqref{Main-Problem} is
	\begin{align*}
		p_{\mathrm{crit}}=\frac{10}{3}.
	\end{align*}
	More precisely, global in-time small data solutions exist for $p>p_{\mathrm{crit}}$ in the setting of \cite{DAbbicco=2026}, whereas for every $1<p\leqslant p_{\mathrm{crit}}$ there exist smooth compactly supported data with arbitrarily small size whose corresponding solutions blow up in finite time.
\end{coro}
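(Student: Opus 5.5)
The corollary is essentially a bookkeeping consequence of three ingredients. The first is the global existence result of \cite[Example~1 and Theorem~4]{DAbbicco=2026} for $p>\frac{10}{3}$ with $u_0\equiv0$. The second is Theorem~\ref{Thm-Blowup} for $2<p\leqslant\frac{10}{3}$. The third is the classical blow-up result \cite[Theorem~9]{DAbbicco-Reissig=2014} for $1<p\leqslant3$, as discussed in Remark~\ref{Remark-Lower-Range}.

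So the plan is to check that each of these applies within the class $u_0\equiv0$ and with arbitrarily small data. Two points need care:
\begin{itemize}
\item the hypotheses of the cited global result must be matched with the present notion of mild Sobolev solution;
\item the blow-up statements must be shown to survive rescaling $u_1\mapsto\varepsilon u_1$.
\end{itemize}

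For the supercritical side, I will simply quote \cite{DAbbicco=2026}. There, \eqref{Main-Problem} with $u_0\equiv0$ is an instance of the abstract damped oscillation framework with parameter $\theta_0=2$. It yields a unique global solution for $p>\frac{10}{3}$ whenever $u_1$ is small in the appropriate (e.g.\ $L^1\cap H^s$) norm. Since we only claim global existence ``in the setting of'' that paper, nothing more is needed beyond noting that the corresponding exponent is exactly $\frac{n+1}{n-1}+\frac13$ with $n=2$.

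For the subcritical and critical side, I will split into the cases $2<p\leqslant\frac{10}{3}$ and $1<p\leqslant 3$; they overlap on $(2,3]$. In the first case, fix any $u_1\in\mathcal C_0^\infty$ with $\int u_1>0$ and put $u_0\equiv0$. Then $\varepsilon u_1$ still satisfies \eqref{Positive-Mean} for every $\varepsilon>0$. Hence Theorem~\ref{Thm-Blowup} gives $T_{\max}<\infty$ for data of arbitrarily small size in any norm. This uses that Theorem~\ref{Thm-Blowup} imposes no largeness assumption, only the sign condition. In the second case, I invoke \cite[Theorem~9]{DAbbicco-Reissig=2014} with $u_0\equiv0$ and nonnegative compactly supported $u_1$. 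The only thing to verify is the rescaling claim of Remark~\ref{Remark-Lower-Range}: the hypotheses of that theorem (nonnegativity and positive integral of $u_1$, possibly with a sign condition on $u_0$ that $u_0\equiv0$ satisfies) are invariant under multiplication by $\varepsilon>0$.

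The only potential obstacle is the compatibility of solution concepts. The blow-up theorem in \cite{DAbbicco-Reissig=2014} is proved for weak solutions via a test function method, while here we use maximal mild Sobolev solutions. I will handle this by observing that a global mild Sobolev solution is in particular a global weak solution, which yields a contradiction. Therefore $T_{\max}<\infty$ in that range too, and combining the three ranges gives $p_{\mathrm{crit}}=\frac{10}{3}$.
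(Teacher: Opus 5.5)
Your proposal is correct and follows the same route as the paper. Global existence is quoted from \cite[Example~1 and Theorem~4]{DAbbicco=2026}; the range $2<p\leqslant\frac{10}{3}$ comes from Theorem~\ref{Thm-Blowup}, whose only hypothesis $M_1>0$ is invariant under $u_1\mapsto\varepsilon u_1$ and allows $u_0\equiv0$; and the range $1<p\leqslant3$ comes from \cite[Theorem~9]{DAbbicco-Reissig=2014} with the rescaling observation of Remark~\ref{Remark-Lower-Range}, where your extra remark on matching mild Sobolev solutions to the weak solutions of \cite{DAbbicco-Reissig=2014} is a detail the paper leaves implicit.
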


\section{Positivity, pointwise estimates, and circular reduction}\label{Section-Kernels}

\subsection{Positivity and the circular kernel}

Let $G_1^{(2)}(t,x)$ denote the two-dimensional velocity fundamental solution, initially understood as the tempered distribution defined by
\begin{align*}
	\widehat G_1^{(2)}(t,\zeta)=\widehat S_1(t,\zeta).
\end{align*}
Since its Fourier transform depends only on $|\zeta|$, according to the representation via the modified Bessel functions, the kernel is radial and we write $G_1^{(2)}(t,r)$ for $r=|x|$.

\begin{lemma}[Laplace representation]\label{Lemma-Laplace-Kernel}
	For $s>0$ and $r>0$, one has
	\begin{align}\label{Laplace-G2}
		\widetilde G_1^{(2)}(s,r)=\frac{1}{2\pi(1+s)}K_0\left(\frac{rs}{\sqrt{1+s}}\right),
	\end{align}
	where $K_0$ denotes the modified Bessel function of the second kind  of order zero. Furthermore, for every $r>0$, the right-hand side of \eqref{Laplace-G2} is completely monotone as a function of $s$.
\end{lemma}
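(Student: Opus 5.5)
The proof has two parts: first compute the Laplace transform in time, then invert in space; afterwards check complete monotonicity by composing standard completely monotone and Bernstein functions.

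\emph{Step 1: Laplace transform in time.} For fixed $\zeta$, the velocity propagator solves
\begin{align*}
	\widehat S_1''+|\zeta|^2\widehat S_1'+|\zeta|^2\widehat S_1=0,\qquad \widehat S_1(0)=0,\ \ \widehat S_1'(0)=1.
\end{align*}
Taking the Laplace transform in $t$ gives
\begin{align*}
	\bigl(s^2+|\zeta|^2 s+|\zeta|^2\bigr)\widetilde{\widehat S_1}=1,
\end{align*}
so that
\begin{align*}
	\widetilde{\widehat S_1}(s,\zeta)=\frac{1}{(1+s)\bigl(|\zeta|^2+\kappa^2\bigr)},\qquad \kappa^2:=\frac{s^2}{1+s}.
\end{align*}
This is valid for $s>0$, because $\widehat S_1(t,\zeta)$ is bounded in $t$ for each fixed $\zeta\neq0$. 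Indeed, the characteristic roots have nonpositive real part, and uniformly the bound is $|\widehat S_1|\le Ct$ near $\zeta=0$ and $\le C$ otherwise. This growth control also justifies exchanging the Laplace transform with the inverse Fourier transform: pair with a test function in $x$ and use Fubini.

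\emph{Step 2: Inverse Fourier transform in $x$.} In $\mb R^2$ one has the classical formula
\begin{align*}
	\ml F^{-1}\bigl[(|\zeta|^2+\kappa^2)^{-1}\bigr](x)=\frac{1}{2\pi}K_0(\kappa|x|).
\end{align*}
This is the Green function of $-\Delta+\kappa^2$, which can be obtained for instance from the subordination formula $\int_0^\infty e^{-\tau\kappa^2}(4\pi\tau)^{-1}e^{-|x|^2/(4\tau)}\dd\tau$. With $\kappa=s/\sqrt{1+s}$ this yields \eqref{Laplace-G2}. One must be careful that the identification holds as locally integrable functions for $r>0$: $K_0$ has only a logarithmic singularity at $r=0$, so it defines a tempered distribution and the equality can be checked weakly.

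\emph{Step 3: Complete monotonicity.} The map $s\mapsto(1+s)^{-1}$ is completely monotone, and products of completely monotone functions are completely monotone. The remaining factor is $K_0(r\phi(s))$ with $\phi(s)=s/\sqrt{1+s}$. Since a completely monotone function composed with a Bernstein function is completely monotone, it suffices to show two things.

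\begin{enumerate}
\item[(i)] $K_0$ is completely monotone on $(0,\infty)$. This follows from $K_0(z)=\int_1^\infty e^{-zt}(t^2-1)^{-1/2}\dd t$, which exhibits $K_0$ as the Laplace transform of a positive measure.
\item[(ii)] $\phi$ is a Bernstein function, i.e. $\phi\ge0$ and $\phi'$ is completely monotone.
\end{enumerate}

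I expect (ii) to be the main obstacle. Write
\begin{align*}
	\phi(s)=\frac{s}{\sqrt{1+s}}=\sqrt{1+s}-\frac{1}{\sqrt{1+s}}.
\end{align*}
Then
\begin{align*}
	\phi'(s)=\tfrac12(1+s)^{-1/2}+\tfrac12(1+s)^{-3/2},
\end{align*}
a sum of completely monotone functions, so $\phi'$ is completely monotone. Moreover $\phi(0)=0$ and $\phi\ge0$ on $(0,\infty)$, so $\phi$ is Bernstein. The scaling by $r>0$ preserves this. Combining the three factors gives complete monotonicity of the right-hand side of \eqref{Laplace-G2} for every fixed $r>0$.
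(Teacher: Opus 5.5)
Your proposal is correct and follows essentially the same route as the paper: factor the Laplace-transformed multiplier as $(1+s)^{-1}\bigl(|\zeta|^2+\frac{s^2}{1+s}\bigr)^{-1}$, and invert it with the two-dimensional Yukawa kernel $\frac{1}{2\pi}K_0(m|x|)$. Complete monotonicity then comes from $K_0$ being completely monotone composed with the Bernstein function $s/\sqrt{1+s}$, multiplied by $(1+s)^{-1}$; your $K_0(z)=\int_1^\infty \mathrm{e}^{-zt}(t^2-1)^{-1/2}\dd t$ is the paper's $\int_0^\infty \mathrm{e}^{-z\cosh\vartheta}\dd\vartheta$ under $t=\cosh\vartheta$, and your explicit formula for $\phi'$ replaces the paper's citation for the Bernstein property.
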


\begin{proof}
	The Laplace-transformed multiplier is the same as in the three-dimensional problem, namely,
	\begin{align*}
		\widetilde{\widehat G}_1^{(2)}(s,\zeta)=\frac{1}{s^2+s|\zeta|^2+|\zeta|^2}=\frac{1}{1+s}\frac{1}{|\zeta|^2+\frac{s^2}{1+s}}.
	\end{align*}
	The two-dimensional Yukawa kernel (see, for example, \cite{Aronszajn-Smith=1961}) yields \eqref{Laplace-G2} via the identity
	\begin{align*}
		\mathcal F_{\zeta\to x}^{-1}\left[\frac{1}{|\zeta|^2+m^2}\right](x)=\frac{1}{2\pi}K_0(m|x|)\ \ \mbox{for}\ \ m=\frac{s}{\sqrt{1+s}}>0.
	\end{align*}
	 Let $\psi(s):=\frac{s}{\sqrt{1+s}}$. As in \cite[Lemma~4.1]{Chen-StronglyDamped-3D=2026}, $\psi$ is a Bernstein function. Moreover, we write
	\begin{align*}
		K_0(z)=\int_0^\infty\mathrm{e}^{-z\cosh\vartheta}\dd\vartheta.
	\end{align*}
	Thus, $s\mapsto K_0\bigl(r\psi(s)\bigr)$ is completely monotone for every fixed $r>0$. Multiplication by $(1+s)^{-1}$ preserves complete monotonicity. Bernstein's theorem and the composition properties used here can be found in \cite{Schilling-Song-Vondracek=2012} (see also \cite{Hanyga-Seredynska=2010} for positivity arguments for related viscoelastic equations).
\end{proof}

The next lemma records the only local singularity as $r\to 0^+$ needed in the identification argument and later circular averaging.

\begin{lemma}[Local behavior and kernel identification]\label{Lemma-Kernel-Identification}
	For every fixed $r>0$, the inverse Fourier representation of $G_1^{(2)}(t,r)$ defines a function in $\ml C\bigl((0,\infty)\bigr)\cap L^1_{\mathrm{loc}}\bigl([0,\infty)\bigr)$. Moreover, for every $T>0$, one estimates
	\begin{align}\label{Log-Singularity}
		|G_1^{(2)}(t,r)|\lesssim_T1+|\log r|
	\end{align}
	for $0<t\leqslant T$ and $0<r\leqslant1$. Furthermore,
	\begin{align}
		G_1^{(2)}(t,r)\geqslant0\label{G2-Positive}
	\end{align}
	for every $t>0$ and $r>0$.
\end{lemma}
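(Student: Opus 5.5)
We would prove the lemma in three steps: regularity in $t$ for fixed $r$, the logarithmic bound, and positivity. Positivity comes from the complete monotonicity in Lemma~\ref{Lemma-Laplace-Kernel} combined with Bernstein's theorem and uniqueness of the Laplace transform.

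\textbf{Regularity.} First split $\widehat S_1(t,\zeta)$ into low and high frequencies with a smooth radial cutoff $\chi(|\zeta|)$. The roots of $\lambda^2+|\zeta|^2\lambda+|\zeta|^2=0$ are explicit. On low frequencies $\widehat S_1$ is a bounded, compactly supported symbol, smooth in $t>0$, so its inverse Fourier transform is smooth and bounded in $(t,x)$.

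For $|\zeta|\geqslant 2$ the roots are real, with $\lambda_+\to-1$ and $\lambda_-\approx-|\zeta|^2$. Then
\begin{align*}
	\widehat S_1(t,\zeta)=\frac{\mathrm{e}^{\lambda_+t}-\mathrm{e}^{\lambda_-t}}{\lambda_+-\lambda_-},
\end{align*}
which is of size $|\zeta|^{-2}$ and not integrable in $\mb R^2$. We therefore subtract the model part $\mathrm{e}^{-t}\bigl(1-\mathrm{e}^{-t|\zeta|^2}\bigr)|\zeta|^{-2}(1-\chi)$. The remainder is $O(|\zeta|^{-4})$ uniformly for $t\leqslant T$, so it is integrable and yields a continuous bounded function. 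The model part has inverse transform
\begin{align*}
	\mathrm{e}^{-t}\cdot\frac{1}{2\pi}K_0\text{-type kernel minus a heat-smoothed version},
\end{align*}
that is, $\mathrm{e}^{-t}\bigl(\Phi-\Phi*H_t\bigr)$ with $\Phi$ the logarithmic Newton-type kernel (cut off in frequency). This piece is continuous in $t>0$ for fixed $r>0$, locally integrable in $t$ up to $0$, and bounded by $C_T(1+|\log r|)$ for $r\leqslant1$, since $\Phi(r)\sim-\frac{1}{2\pi}\log r$ and $\Phi*H_t$ is bounded for each $t$ and is at most logarithmic near $t=0$. This gives the function class and \eqref{Log-Singularity} simultaneously. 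Moreover, for fixed $r$, $G_1^{(2)}(\cdot,r)$ is then a locally integrable function of at most exponential growth. We also get this from $|\widehat S_1|\lesssim 1/\langle\zeta\rangle^{2}$ after the same subtraction, with a $t$-uniform bound on $[0,T]$ and a crude growth bound such as boundedness for large $t$ away from $r=0$.

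\textbf{Identification.} Next, compute the Laplace transform of $t\mapsto G_1^{(2)}(t,r)$. Pair $G_1^{(2)}(t,\cdot)$ with a test function $\varphi$, use Fubini (justified by the integrable bounds above), and use the Fourier-side computation of Lemma~\ref{Lemma-Laplace-Kernel}. This shows that $\int_0^\infty\mathrm{e}^{-st}G_1^{(2)}(t,r)\dd t$ agrees, as a function of $r>0$ (continuous on both sides), with \eqref{Laplace-G2} for all $s>0$. Fubini is legitimate because the low-frequency part is bounded and the high-frequency part decays like $\mathrm{e}^{-ct}$.

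\textbf{Positivity.} By Lemma~\ref{Lemma-Laplace-Kernel} the right-hand side of \eqref{Laplace-G2} is completely monotone. Bernstein's theorem then gives a positive measure $\mu_r$ on $[0,\infty)$ with $\widetilde G_1^{(2)}(s,r)=\int\mathrm{e}^{-st}\dd\mu_r(t)$. Uniqueness of Laplace transforms of measures yields $\mu_r=G_1^{(2)}(t,r)\dd t$, so $G_1^{(2)}(t,r)\geqslant0$ for a.e. $t$, and by continuity in $t$ for every $t>0$.

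\textbf{Main obstacle.} We expect the main difficulty to be the careful high-frequency subtraction near $t=0$, where the heat-smoothed log kernel must be controlled uniformly while justifying Fubini and the Laplace identification. If this proves delicate, we would instead define $G$ via the Laplace inversion of the completely monotone function and compare distributionally.
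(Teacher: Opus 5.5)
Your proposal is correct. The positivity step (Bernstein's theorem, uniqueness of the Laplace transform for Radon measures, then continuity in $t$) is exactly the paper's; the analytic part follows a different route.

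\textbf{The paper's route.} It never subtracts a model. It works with the radial Fourier--Bessel integral $\frac1{2\pi}\int_0^\infty\widehat S_1(t,\varrho)J_0(r\varrho)\varrho\dd\varrho$. This converges absolutely for each $r>0$, because $|\widehat S_1|\lesssim\varrho^{-2}$ at high frequency and $|J_0(z)|\lesssim z^{-\frac12}$ make the integrand $O(r^{-\frac12}\varrho^{-\frac32})$. Splitting at $\varrho=r^{-1}$ gives the bound $1+|\log r|$ (Appendix~B). The identification is Fubini in $(t,\varrho)$ followed by the Hankel transform $\int_0^\infty\frac{J_0(r\varrho)\varrho}{\varrho^2+m^2}\dd\varrho=K_0(mr)$. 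This yields the Laplace transform pointwise in $r$, with no test-function pairing.

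\textbf{Your route.} Subtracting $\mathrm{e}^{-t}(1-\mathrm{e}^{-t|\zeta|^2})|\zeta|^{-2}(1-\chi)$ also works. The remainder is indeed $O(|\zeta|^{-4})$ uniformly on $[0,T]$, even though the exact fast exponent is $\mathrm{e}^{t-t|\zeta|^2}$, since $\sinh t\,\mathrm{e}^{-t|\zeta|^2}|\zeta|^{-2}\lesssim_T|\zeta|^{-4}$. What this buys is an explicit singular part. Writing $(1-\mathrm{e}^{-t\varrho^2})\varrho^{-2}=\int_0^t\mathrm{e}^{-\tau\varrho^2}\dd\tau$, that part is, up to a bounded low-frequency correction, $\frac{\mathrm{e}^{-t}}{4\pi}\int_{r^2/(4t)}^\infty\frac{\mathrm{e}^{-u}}{u}\dd u$, which is nonnegative and, apart from the factor $\mathrm{e}^{-t}\leqslant1$, increasing in $t$. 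That is also the clean way to get the uniform bound you actually need. Your phrase that $\Phi*H_t$ is ``at most logarithmic near $t=0$'' must mean $|(\Phi*H_t)(x)|\lesssim1+|\log|x||$ uniformly for $t\in(0,T]$. A bound in $|\log t|$ would not give the logarithmic estimate.

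\textbf{Cost of your route.} It involves bookkeeping the paper avoids. The subtraction must be carried out for all $t$ to justify Fubini in $(t,x)$; the remainder is then $O(\mathrm{e}^{-ct}(1+t)|\zeta|^{-4})$. You also need continuity in $x\neq0$ of both sides to upgrade the distributional identity to a pointwise one.

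In short, the paper exploits the fact that in two dimensions the decay of $J_0$ already makes the radial integral absolutely convergent away from $r=0$.
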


\begin{proof}
	The low- and middle-frequency parts are smooth away from $t=0$. The high-frequency multiplier bounds proved in \cite[Appendix~A]{Chen-StronglyDamped-3D=2026} are dimension-independent. Together with the two-dimensional Fourier-Bessel representation, they imply the absolute convergence for every $r>0$ and the logarithmic bound \eqref{Log-Singularity}, whose short verification is deferred to Appendix~\ref{Appendix-High-2D}.

	Fix $r>0$ and set $g_r(t):=G_1^{(2)}(t,r)$. The standard multiplier estimates in the low-, middle-, and high-frequency regions imply that, for every $s>0$, one has
	\begin{align*}
		\int_0^\infty\int_0^\infty\mathrm{e}^{-st}\bigl|\widehat S_1(t,\varrho)J_0(r\varrho)\bigr|\varrho\dd\varrho\dd t<\infty,
	\end{align*}
	where $J_0$ denotes the Bessel function of the first kind of order zero.
	Hence, Fubini's theorem and the Fourier-Bessel representation give
	\begin{align*}
		\int_0^\infty\mathrm{e}^{-st}g_r(t)\dd t
		&=\frac{1}{2\pi}\int_0^\infty J_0(r\varrho)\varrho\left(\int_0^\infty\mathrm{e}^{-st}\widehat S_1(t,\varrho)\dd t\right)\mathrm{d}\varrho\\
		&=\frac{1}{2\pi}\int_0^\infty\frac{J_0(r\varrho)\varrho}{s^2+(1+s)\varrho^2}\dd\varrho\\
		&=\frac{1}{2\pi(1+s)}K_0\left(\frac{rs}{\sqrt{1+s}}\right)=:F_r(s).
	\end{align*}
	It means that the pointwise kernel $g_r$ is Laplace transformable, and its Laplace transform coincides with the resolvent kernel computed in Lemma~\ref{Lemma-Laplace-Kernel}. Bernstein's theorem therefore provides a nonnegative Radon measure $\mu_r$ on $[0,\infty)$ such that
	\begin{align*}
		F_r(s)=\int_{[0,\infty)}\mathrm{e}^{-st}\dd\mu_r(t)\ \ \mbox{for every}\ \ s>0.
	\end{align*}
	On the other hand, $g_r\in L^1_{\mathrm{loc}}\bigl([0,\infty)\bigr)$, and the preceding estimates show that the signed Radon measure $\dd\nu_r(t):=g_r(t)\dd t$ has the same Laplace transform as follows:
	\begin{align*}
		\int_{[0,\infty)}\mathrm{e}^{-st}\dd\nu_r(t)=F_r(s)=\int_{[0,\infty)}\mathrm{e}^{-st}\dd\mu_r(t)\ \ \mbox{for every}\ \ s>0.
	\end{align*}
	The uniqueness of the Laplace transform for Radon measures yields $\nu_r=\mu_r$. Consequently, for every nonnegative $\phi\in\mathcal C_0^\infty\bigl((0,\infty)\bigr)$,
	\begin{align*}
		\int_0^\infty g_r(t)\phi(t)\dd t=\int_{[0,\infty)}\phi(t)\dd\mu_r(t)\geqslant0.
	\end{align*}
	Thus, $g_r(t)\geqslant0$ for almost every $t>0$. Since $g_r\in\mathcal C\bigl((0,\infty)\bigr)$, we conclude that $G_1^{(2)}(t,r)=g_r(t)\geqslant0$ for every $t>0$.
\end{proof}

To rewrite the radial Duhamel term as an integral operator on the half-line, let $F=F(\rho)$ be radial and fix $|x|=r$. Passing to polar coordinates gives
\begin{align*}
	\bigl(G_1^{(2)}(t,\cdot)*F\bigr)(r)=\int_0^\infty \rho F(\rho)\int_0^{2\pi}G_1^{(2)}\left(t,\sqrt{r^2+\rho^2-2r\rho\cos\theta}\,\right)\mathrm{d}\theta\dd\rho.
\end{align*}
After introducing the conjugated radial unknown $w(t,r):=\sqrt{r}\,u(t,r)$, it is natural to define the symmetric circularly averaged kernel
\begin{align}\label{Circular-Kernel}
	D(t,r,\rho):=\sqrt{r\rho}\int_0^{2\pi}G_1^{(2)}\left(t,\sqrt{r^2+\rho^2-2r\rho\cos\theta}\,\right)\mathrm{d}\theta.
\end{align}
By Lemma~\ref{Lemma-Kernel-Identification}, the integral is finite even when $r=\rho$. Notice that
\begin{align*}
	\sqrt r\bigl(G_1^{(2)}(t,\cdot)*F\bigr)(r)=\int_0^\infty D(t,r,\rho)\sqrt\rho F(\rho)\dd\rho.
\end{align*}
 Indeed,
\begin{align*}
	\sqrt{r^2+\rho^2-2r\rho\cos\theta}=2r\left|\sin\frac{\theta}{2}\right|\approx r|\theta|
\end{align*}
near $\theta=0$ if $r=\rho$, while $|\log|\theta||$ is integrable. In particular, from \eqref{G2-Positive}, we have
\begin{align}\label{D-Positive}
	D(t,r,\rho)\geqslant0.
\end{align}

For later use, we put $a:=|r-\rho|$ and $b:=r+\rho$. A direct change of variables in \eqref{Circular-Kernel} gives
\begin{align}\label{Angular-Representation}
	D(t,r,\rho)=4\sqrt{r\rho}\int_a^bG_1^{(2)}(t,\ell)\frac{\ell\dd\ell}{\sqrt{(b^2-\ell^2)(\ell^2-a^2)}}.
\end{align}
The endpoint singularities at $\ell\in\{a,b\}$ in \eqref{Angular-Representation} are locally integrable. The formula \eqref{Angular-Representation} is the two-dimensional counterpart of the dimension-descent representation of the positive half-line kernel in \cite[Section~4.2]{Chen-StronglyDamped-3D=2026}.

\begin{remark}
	For comparison, the dimension-descent identity in \cite[Formula (4.13)]{Chen-StronglyDamped-3D=2026} gives the three-dimensional half-line Dirichlet kernel
	\begin{align*}
		D_3(t,r,\rho)=2\pi\int_{|r-\rho|}^{r+\rho}
		G_1^{(3)}(t,\ell)\ell\dd\ell.
	\end{align*}
	Thus, a lower bound for $D_3$ follows essentially by locating a positive wave-front interval inside $[|r-\rho|,r+\rho]$, since the radial weight $\ell$ is regular there. In two dimensions, no analogous odd-extension representation is available, instead, circular averaging providing
	\begin{align*}
		D_2(t,r,\rho)=4\sqrt{r\rho}\int_{|r-\rho|}^{r+\rho}G_1^{(2)}(t,\ell)\frac{\ell\dd\ell}{\sqrt{[(r+\rho)^2-\ell^2][\ell^2-(r-\rho)^2]}}.
	\end{align*}
	The angular Jacobian depends on the distances from $\ell$ to both endpoints and has integrable square-root singularities. Consequently, lower bounds for $D_2$ require a more precise control of the relative geometry of $t$, $r$, and $\rho$, than $D_3$. 
\end{remark}

\subsection{Pointwise estimates and kernel lower bounds}

The moving wave-front region is again described by
\begin{align*}
	r=t-\sigma\sqrt t,
\end{align*}
where $\sigma$ belongs to a fixed compact subset of $(0,\infty)$. We define the regular (in the sense of subtracting the Dirac singular part) position kernel via
\begin{align*}
	G_0^{(2),\reg}(t,x):=\ml F^{-1}_{\zeta\to x}\left[\widehat S_0(t,\zeta)-\mathrm{e}^{-t}\right].
\end{align*}
Namely, in the sense of tempered distributions where $\delta_0$ denotes the Dirac measure concentrated at the spatial origin, one notices that
\begin{align*}
	G_0^{(2)}(t,x)=\mathrm{e}^{-t}\delta_0+G_0^{(2),\reg}(t,x).
\end{align*}
This definition is independent of the auxiliary frequency partition. Define
\begin{align*}
	\Phi(\sigma):=\frac{1}{4\pi^{\frac32}}\int_0^\infty\mathrm{e}^{-\frac{(y-\sigma)^2}{2}}y^{-\frac12}\dd y.
\end{align*}
Notice that $\Phi(\sigma)>0$ for every $\sigma\in\mb R$. Unlike the Gaussian wave-front profile $\frac{1}{4\pi\sqrt{2\pi}}\mathrm{e}^{-\frac{\sigma^2}{2}}$ in three dimensions (see \cite[Proposition 4.5]{Chen-StronglyDamped-3D=2026}), the function
$\Phi$ retains the square-root singularity of the interior tail of the
two-dimensional free wave kernel.

\begin{prop}[Uniform wave-front asymptotic expansion]\label{Prop-Front-Asymptotics}
	Let $0<\sigma_-<\sigma_+<\infty$. Uniformly for $\sigma\in[\sigma_-,\sigma_+]$, one has, as $t\to\infty$,
	\begin{align*}
		G_1^{(2)}\bigl(t,t-\sigma\sqrt t\,\bigr)&=t^{-\frac34}\Phi(\sigma)+o(t^{-\frac{3}{4}}),\\
		G_0^{(2),\reg}\bigl(t,t-\sigma\sqrt t\,\bigr)&=O(t^{-\frac54}).
	\end{align*}
\end{prop}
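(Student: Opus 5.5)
The plan is to split $\widehat S_1(t,\zeta)$ and $\widehat S_0(t,\zeta)-\mathrm{e}^{-t}$ into low, middle and high frequency parts with a smooth partition. On the low part, where $|\zeta|\leqslant\varepsilon_0$, I will replace the multiplier by a model: the free wave propagator composed with a heat semigroup. The expansion should then come out of an explicit convolution computation near the front.

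\textbf{Step 1 (non-low frequencies).} The characteristic roots are $\lambda_\pm(\zeta)=\frac12\bigl(-|\zeta|^2\pm\sqrt{|\zeta|^4-4|\zeta|^2}\,\bigr)$. On $|\zeta|\geqslant\varepsilon_0$ one has $\mathrm{Re}\,\lambda_-\leqslant\mathrm{Re}\,\lambda_+\leqslant-c$, apart from the root $\lambda_+\to-1$ as $|\zeta|\to\infty$. That root is exactly the source of the term $\mathrm{e}^{-t}$ subtracted in $G_0^{(2),\reg}$. I will use the high-frequency multiplier bounds of \cite[Appendix~A]{Chen-StronglyDamped-3D=2026}, which are dimension-independent, together with the two-dimensional Fourier--Bessel representation (as in Appendix~\ref{Appendix-High-2D}). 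After removing $\mathrm{e}^{-t}$, these give contributions of size $O(\mathrm{e}^{-ct})$ to both kernels, pointwise for $|x|\geqslant1$, and in particular at $|x|=t-\sigma\sqrt t$.

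\textbf{Step 2 (low-frequency model).} For $|\zeta|$ small, $\lambda_\pm=-\frac{|\zeta|^2}{2}\pm i|\zeta|\bigl(1+O(|\zeta|^2)\bigr)$. This gives
\begin{align*}
\widehat S_1(t,\zeta)=\mathrm{e}^{-\frac{t|\zeta|^2}{2}}\frac{\sin(t|\zeta|)}{|\zeta|}\bigl(1+O(|\zeta|^2)\bigr)+\mathrm{e}^{-\frac{t|\zeta|^2}{2}}O\bigl(t|\zeta|^2\bigr)
\end{align*}
together with the analogous phase corrections. The main term is the Fourier symbol of $W(t,\cdot)*H_t$. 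Here $W(t,x)=\frac{1}{2\pi}(t^2-|x|^2)_+^{-1/2}$ is the free wave kernel, and $H_t$ is the Gaussian with covariance $t\,I$.

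I then evaluate $W*H_t$ at $|x|=t-\sigma\sqrt t$ by passing to normal and tangential coordinates relative to the circle $|y|=t$. Near the front, $(t^2-|y|^2)^{-1/2}=(2t)^{-1/2}(t-|y|)^{-1/2}(1+o(1))$. I write $t-|y|=\eta\sqrt t$. The tangential Gaussian integrates to $1$, since the curvature is $t^{-1}\ll t^{-1/2}$, the Gaussian width. The normal density becomes $(2\pi)^{-1/2}\mathrm{e}^{-(\eta-\sigma)^2/2}\dd\eta$. Collecting factors gives
\begin{align*}
\frac{1}{2\pi}(2t)^{-\frac12}t^{-\frac14}(2\pi)^{-\frac12}\int_0^\infty\mathrm{e}^{-\frac{(\eta-\sigma)^2}{2}}\eta^{-\frac12}\dd\eta=t^{-\frac34}\Phi(\sigma).
\end{align*}
This identifies the constant $\frac{1}{4\pi^{3/2}}$. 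The far interior ($\eta\gg1$) and the exterior are controlled by Gaussian tails. The interior tail of $W$ is only $t^{-1}$-sized there, and it decays fast enough after weighting by the Gaussian.

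\textbf{Step 3 (errors and the position kernel).} The main obstacle, in my view, is to prove that the corrections are $o(t^{-3/4})$ uniformly in $\sigma\in[\sigma_-,\sigma_+]$. Pointwise $L^1$ bounds on the multiplier lose too much, because $\|\widehat S_1\|_{L^1}$ does not reflect the front concentration.

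My first attempt is as follows. I rewrite each correction multiplier as (model) $\times\,m(t,\zeta)$ with $|\partial^\alpha_\zeta m|\lesssim t^{-1/2}|\zeta|^{-|\alpha|}$ on the effective scale $|\zeta|\lesssim t^{-1/2}$. This works because $t|\zeta|^3\approx t^{-1/2}$ and $|\zeta|^2\approx t^{-1}$ there. The corresponding kernel is then the model kernel convolved with an $L^1$-bounded, $\sqrt t$-scale kernel of small mass, or a derivative of such a kernel. Rescaling $x=t^{1/2}X$ in the normal direction, the same convolution computation as in Step 2 gives $O(t^{-5/4})$ plus Gaussian tails.

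For $G_0^{(2),\reg}$, the low part is $\mathrm{e}^{-t|\zeta|^2/2}\bigl(\cos(t|\zeta|)+O(|\zeta|)\sin(t|\zeta|)\bigr)$. Since $\cos(t|\zeta|)=\partial_t\bigl(\sin(t|\zeta|)/|\zeta|\bigr)$, its kernel is $\partial_tW*H_t$. Differentiating in the normal variable costs one factor $t^{-1/2}$ relative to Step 2, which gives $O(t^{-5/4})$. The $O(|\zeta|)$ term similarly gains $t^{-1/2}$ over the $S_1$ model.
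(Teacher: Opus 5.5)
Your Steps 1 and 2 are essentially the paper's proof. You use the same low/middle/high splitting, with the middle- and high-frequency regular parts exponentially small on the front. You also evaluate the same principal kernel $P_1=H_t*W_t$ in physical space rather than on the Fourier side. Your normal/tangential computation reproduces the constant $\frac{1}{4\pi^{3/2}}$. The paper instead substitutes $y=x-\sqrt t\,z$, writes $t^2-|y|^2=t^{3/2}Q_t(\sigma,z)$, and passes to the limit by dominated convergence, controlling the moving square-root singularity uniformly in Appendix~\ref{Appendix-Front}. In your polar coordinates the singular factor is exactly $(t-|y|)^{-1/2}$, which makes that uniformity even more transparent.

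You depart from the paper in Step 3, and the obstacle you identify there is not real. The full multiplier does have $L^1$ norm of order $t^{-1/2}$, but the remainder does not. With $\omega=\rho\sqrt{1-\rho^2/4}$ one has $\widehat S_1(t,\rho)=\mathrm{e}^{-t\rho^2/2}\sin(t\omega)/\omega$, so on $\rho\leqslant\varepsilon_0$ the difference from the model is $O\bigl(\mathrm{e}^{-t\rho^2/2}(\rho+t\rho^2)\bigr)$. Hence
\[
\int_{|\zeta|\leqslant\varepsilon_0}\mathrm{e}^{-\frac{t|\zeta|^2}{2}}\bigl(|\zeta|+t|\zeta|^2\bigr)\dd\zeta\lesssim t^{-1}=o\bigl(t^{-\frac34}\bigr),
\]
which is exactly how the paper closes the velocity estimate.

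For $G_0^{(2),\reg}$ the paper likewise avoids symbol calculus. On the principal term $\mathrm{e}^{-t\rho^2/2}\cos(t\rho)$ it uses $|J_0(r\rho)|\lesssim(1+r\rho)^{-1/2}$ with $r\approx t$, which gives $t^{-1/2}\int_0^\infty\mathrm{e}^{-t\rho^2/2}\rho^{1/2}\dd\rho\approx t^{-5/4}$. The remainder $O\bigl(\mathrm{e}^{-t\rho^2/2}(\rho+t\rho^3)\bigr)$ is $O(t^{-3/2})$ in $L^1$. Your $\partial_tW*H_t$ argument is a valid alternative: scaling gives $\langle\partial_tW_t,\phi\rangle=t^{-1}\langle W_t,\phi+y\cdot\nabla\phi\rangle$. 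With $\phi=H_t(x-\cdot)$, the factor $y\cdot(x-y)/t^2\approx t^{-1/2}z_1$ then yields $O(t^{-5/4})$.

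Your first attempt for $S_1$ would need repair if you kept it.
\begin{itemize}
\item The phase error cannot be written as the sine model times a small symbol, since $\sin(t\rho)$ vanishes where $\sin(t\omega)$ does not. The addition formula produces a cosine-model term multiplied by $\sin(t(\omega-\rho))/\omega\approx-t\rho^2/8$. On the scale $\rho\approx t^{-1/2}$ this factor is $O(1)$, not $O(t^{-1/2})$; the gain has to come from the cosine model itself, as in your $G_0$ argument.
\item Mikhlin-type bounds $|\partial^\alpha m|\lesssim t^{-1/2}|\zeta|^{-|\alpha|}$ do not by themselves give an $L^1$ kernel. You would have to attach part of the Gaussian to $m$.
\end{itemize}
Your route would give a sharper $O(t^{-5/4})$ remainder, but the statement only needs $o(t^{-3/4})$, and the paper's one-line $L^1$ bound already gives that.
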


\begin{proof}
	We only explain the two-dimensional principal term, since the exact-to-principal multiplier remainders are estimated in the same way as in \cite[Appendix~B]{Chen-StronglyDamped-3D=2026}. Let
	\begin{align*}
		P_1(t,x):=\ml F^{-1}_{\zeta\to x}\left[\mathrm{e}^{-\frac{t|\zeta|^2}{2}}\frac{\sin(t|\zeta|)}{|\zeta|}\right](x).
	\end{align*}
	The multiplier is the product of the heat multiplier and the velocity propagator of the free two-dimensional wave equation. Hence, to avoid the sine oscillation as $t\to\infty$ in the Fourier space, we express the kernel in the physical space as follows:
	\begin{align*}
		P_1(t)=H_t*W_t,
	\end{align*}
	where
	\begin{align*}
		H_t(x)=\frac{1}{2\pi t}\mathrm{e}^{-\frac{|x|^2}{2t}}\ \ \mbox{and}\ \ W_t(x)=\frac{1}{2\pi}\frac{\mathbf 1_{\{|x|<t\}}}{\sqrt{t^2-|x|^2}}.
	\end{align*}
	For $x=(t-\sigma\sqrt t\,)e_1$, set $y=x-\sqrt t\,z$. Then
	\begin{align*}
		t^{\frac34}P_1(t,x)=\frac{1}{4\pi^2}\int_{\{Q_t(\sigma,z)>0\}}\mathrm{e}^{-\frac{|z|^2}{2}}[Q_t(\sigma,z)]^{-\frac12}\dd z,
	\end{align*}
	where we define
	\begin{align*}
		Q_t(\sigma,z):=2(\sigma+z_1)-t^{-\frac12}\bigl((\sigma+z_1)^2+z_2^2\bigr).
	\end{align*}
	The moving square-root singularity is uniformly integrable. Indeed, with $\varepsilon=t^{-\frac12}$ and $h=\sigma+z_1$, we may write
	\begin{align*}
		Q_t=\varepsilon(h_+-h)(h-h_-)\ \ \text{and}\ \  h_\pm:=\frac{1\pm\sqrt{1-\varepsilon^2z_2^2}}{\varepsilon}.
	\end{align*}
	The lower root satisfies $h_-=O(\varepsilon z_2^2)$ on bounded sets and converges to $0$, while $h_+\approx2\varepsilon^{-1}$ and its contribution is exponentially small because of the Gaussian factor. After the change $q=h-h_-$, the remaining singularity is bounded by $Cq^{-\frac12}$ uniformly for $\sigma\in[\sigma_-,\sigma_+]$. Dominated convergence therefore gives
	\begin{align*}
		t^{\frac34}P_1\bigl(t,t-\sigma\sqrt t\,\bigr)\rightarrow\frac{1}{4\pi^{\frac32}}\int_0^\infty\mathrm{e}^{-\frac{(y-\sigma)^2}{2}}y^{-\frac12}\dd y
	\end{align*}
	uniformly in $\sigma$.

	The exact velocity multiplier differs from the principal one by
	\begin{align*}
		O\left(\mathrm{e}^{-ct\rho^2}(\rho+t\rho^2)\right)
	\end{align*}
	at low frequencies. The two-dimensional Fourier measure then gives an $O(t^{-1})$ kernel remainder. The position estimate follows similarly from the principal multiplier $\mathrm{e}^{-\frac{t\rho^2}{2}}\cos(t\rho)$ and the bound $|J_0(z)|\lesssim(1+z)^{-\frac12}$. The middle- and high-frequency regular contributions are exponentially small in the moving-front region. This proves the proposition.
\end{proof}

Due to the continuity and the strict positivity of $\Phi$, we get $\min\limits_{\sigma\in[\sigma_-,\sigma_+]}\Phi(\sigma)>0$, and apply Proposition~\ref{Prop-Front-Asymptotics} to arrive at the next lower bound estimate.
\begin{coro}[Positivity in the moving wave-front region]\label{Coro-Front-Positive}
	Let $0<\sigma_-<\sigma_+<\infty$. There exist $T_{\mathrm{fr}},c_{\mathrm{fr}}>0$ such that
	\begin{align*}
		G_1^{(2)}(t,\ell)\geqslant c_{\mathrm{fr}}t^{-\frac34}
	\end{align*}
	for $t\geqslant T_{\mathrm{fr}}$ and $\ell\in[t-\sigma_+\sqrt t,t-\sigma_-\sqrt t\,]$.
\end{coro}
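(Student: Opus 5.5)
The corollary follows directly from Proposition~\ref{Prop-Front-Asymptotics} once we change variables from $\ell$ to the front parameter $\sigma$. For $t>0$ and $\ell\in[t-\sigma_+\sqrt t,t-\sigma_-\sqrt t\,]$, define
\begin{align*}
	\sigma:=\frac{t-\ell}{\sqrt t}.
\end{align*}
Then $\sigma\in[\sigma_-,\sigma_+]$ and $\ell=t-\sigma\sqrt t$. Once $t>\sigma_+^2$, we also have $\ell>0$, so $G_1^{(2)}(t,\ell)$ is the genuine pointwise kernel from Lemma~\ref{Lemma-Kernel-Identification}. The claim therefore reduces to a lower bound for $G_1^{(2)}(t,t-\sigma\sqrt t\,)$ that holds uniformly in $\sigma\in[\sigma_-,\sigma_+]$.

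First, we record that $\Phi$ is continuous and strictly positive on $[\sigma_-,\sigma_+]$. Continuity follows from dominated convergence, using the majorant $\mathrm{e}^{-(y-\sigma_+)^2/2}y^{-1/2}$ when $y\geqslant\sigma_+$ and $y^{-1/2}$ when $y\leqslant\sigma_+$. Strict positivity is clear because the integrand is positive. Hence
\begin{align*}
	m:=\min_{\sigma\in[\sigma_-,\sigma_+]}\Phi(\sigma)>0.
\end{align*}

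Second, by Proposition~\ref{Prop-Front-Asymptotics}, the remainder
\begin{align*}
	R(t,\sigma):=t^{\frac34}G_1^{(2)}\bigl(t,t-\sigma\sqrt t\,\bigr)-\Phi(\sigma)
\end{align*}
tends to $0$ uniformly in $\sigma\in[\sigma_-,\sigma_+]$. We may therefore choose $T_{\mathrm{fr}}>\sigma_+^2$ such that $|R(t,\sigma)|\leqslant m/2$ for all $t\geqslant T_{\mathrm{fr}}$ and all $\sigma\in[\sigma_-,\sigma_+]$. For such $t$ this gives
\begin{align*}
	t^{\frac34}G_1^{(2)}(t,\ell)\geqslant\Phi(\sigma)-\frac{m}{2}\geqslant\frac{m}{2},
\end{align*}
so the corollary holds with $c_{\mathrm{fr}}:=m/2$.

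The only point that needs care is that the $o(t^{-3/4})$ term in Proposition~\ref{Prop-Front-Asymptotics} is uniform in $\sigma$; a merely pointwise statement would not give a single threshold $T_{\mathrm{fr}}$. The proposition is stated uniformly, so no further estimate is required, and the rest of the argument is elementary.
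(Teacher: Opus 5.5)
Your proof is correct and follows the paper's argument: the paper likewise uses continuity and strict positivity of $\Phi$ to get $\min_{\sigma\in[\sigma_-,\sigma_+]}\Phi(\sigma)>0$, then combines this with the $\sigma$-uniform remainder in Proposition~\ref{Prop-Front-Asymptotics}. Your reparametrization $\sigma=(t-\ell)/\sqrt t$, the check that $\ell>0$ once $t>\sigma_+^2$, and the dominated-convergence proof of continuity of $\Phi$ spell out details the paper leaves implicit.
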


\begin{prop}[Positive linear lower bound]\label{Prop-Linear-Lower}
	Assume that $u_0,u_1\in\ml C_0^\infty$ and additionally that $M_1$ in \eqref{Positive-Mean} is positive. Define
	\begin{align*}
		W_{\lin}(t,r):=r^{\frac12}\ml M\bigl[S_0(t)u_0+S_1(t)u_1\bigr](r).
	\end{align*}
	For every fixed $0<\sigma_-<\sigma_+<\infty$, one has
	\begin{align}\label{Linear-Lower-Asymptotics}
		W_{\lin}\bigl(t,t-\sigma\sqrt t\,\bigr)=M_1\Phi(\sigma)\,t^{-\frac14}+o(t^{-\frac14})\ \ \text{as}\ \ t\to\infty,
	\end{align}
	uniformly for $\sigma\in[\sigma_-,\sigma_+]$. Consequently, there exist $T_{\lin},c_{\lin}>0$ such that
	\begin{align*}
		W_{\lin}(t,r)\geqslant c_{\lin}t^{-\frac14}
	\end{align*}
	whenever $t\geqslant T_{\lin}$ and $r=t-\sigma\sqrt t$ for $\sigma\in[\sigma_-,\sigma_+]$.
\end{prop}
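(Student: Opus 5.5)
We split the linear solution as
\begin{align*}
	S_0(t)u_0+S_1(t)u_1=\mathrm{e}^{-t}u_0+G_0^{(2),\reg}(t,\cdot)*u_0+G_1^{(2)}(t,\cdot)*u_1
\end{align*}
and treat the three pieces separately at points $x$ with $|x|=r=t-\sigma\sqrt t$.

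\emph{The $u_0$ terms.} For $t$ large, $r\geqslant t/2$ lies outside $\operatorname{supp}u_0\subset B_R$, so the term $\mathrm{e}^{-t}u_0$ vanishes there. For $|y|\leqslant R$, the distance $|x-y|$ has the form $t-\sigma'\sqrt t$ with $|\sigma'-\sigma|\leqslant R t^{-1/2}$. Hence $\sigma'$ stays in a compact subset of $(0,\infty)$, say $[\sigma_-/2,2\sigma_+]$. Proposition~\ref{Prop-Front-Asymptotics} then gives $|G_0^{(2),\reg}(t,x-y)|\lesssim t^{-5/4}$. It follows that this contribution, and its circular mean, is $O(t^{-5/4})$. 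After multiplying by $r^{1/2}\approx t^{1/2}$ it is $O(t^{-3/4})=o(t^{-1/4})$.

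\emph{The $u_1$ term.} Set $\ell=|x-y|=t-\sigma'\sqrt t$ as above. By Proposition~\ref{Prop-Front-Asymptotics}, applied uniformly on the enlarged compact interval, we have $G_1^{(2)}(t,\ell)=t^{-3/4}\Phi(\sigma')+o(t^{-3/4})$. Since $\Phi$ is uniformly continuous on compacts and $|\sigma'-\sigma|\lesssim t^{-1/2}$, this becomes $t^{-3/4}\Phi(\sigma)+o(t^{-3/4})$, uniformly in $y\in B_R$, in $\sigma\in[\sigma_-,\sigma_+]$, and in the direction of $x$. Integrating against $u_1$ gives
\begin{align*}
	\bigl(G_1^{(2)}(t,\cdot)*u_1\bigr)(x)=M_1\Phi(\sigma)t^{-3/4}+o(t^{-3/4}),
\end{align*}
uniformly on the circle $|x|=r$. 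Taking the circular mean $\ml M$ therefore preserves the asymptotics.

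\emph{Multiplying by $r^{1/2}$.} We have $r^{1/2}=t^{1/2}\bigl(1-\sigma t^{-1/2}\bigr)^{1/2}=t^{1/2}\bigl(1+O(t^{-1/2})\bigr)$. Multiplying the previous expansion by this factor yields \eqref{Linear-Lower-Asymptotics}. The lower bound follows by choosing $c_{\lin}=\frac12M_1\min_{[\sigma_-,\sigma_+]}\Phi>0$ and taking $T_{\lin}$ so large that the $o(t^{-1/4})$ error is below $c_{\lin}t^{-1/4}$.

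\emph{Main obstacle.} The only delicate point is the uniformity in the shifted parameter $\sigma'$. This requires that Proposition~\ref{Prop-Front-Asymptotics} hold on a slightly enlarged compact interval, which is allowed since it holds for arbitrary $0<\sigma_-<\sigma_+$. It also requires the shift itself to be controlled. A careful expansion of $|x-y|$ gives $|x-y|=r-\omega\cdot y+O(R^2/t)$, so $\sigma'-\sigma=O(t^{-1/2})$, which suffices.
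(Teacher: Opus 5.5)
Your proof is correct and follows essentially the same route as the paper. The paper likewise writes $t-|x-y|=\sigma\sqrt t+O(1)$ for $|y|\leqslant R$, applies Proposition~\ref{Prop-Front-Asymptotics} to the velocity term together with the uniform continuity of $\Phi$, bounds the regular position part by $O(t^{-5/4})$, drops $\mathrm{e}^{-t}u_0$ by support, and multiplies by $r^{1/2}=t^{1/2}(1+o(1))$. Your explicit enlargement of the compact $\sigma$-interval just makes precise a step the paper leaves implicit. The elementary bound $\bigl||x-y|-|x|\bigr|\leqslant|y|\leqslant R$, which you already state, gives $|\sigma'-\sigma|\leqslant Rt^{-1/2}$ directly, so the finer expansion in your last paragraph is not needed.
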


\begin{proof}
	Let the supports of the initial data be contained in $B_R$. For $x=r\omega$ and $|y|\leqslant R$, we expand
	\begin{align*}
		\frac{t-|x-y|}{\sqrt t}=\sigma+O(t^{-\frac12})
	\end{align*}
	uniformly in $\omega$, $\sigma$ and $y$. Proposition~\ref{Prop-Front-Asymptotics} and the uniform continuity of $\Phi$ therefore imply
	\begin{align*}
		S_1(t)u_1(x)=M_1t^{-\frac34}\Phi(\sigma)+o(t^{-\frac34}).
	\end{align*}
	The regular position contribution is $O(t^{-\frac54})$, and its singular part $\mathrm{e}^{-t}u_0(x)$ vanishes for large $t$. Taking the circular mean and multiplying by $r^{\frac12}=t^{\frac12}(1+o(1))$ proves \eqref{Linear-Lower-Asymptotics}.
\end{proof}

We use the same following characteristic variables as in the three-dimensional paper:
\begin{align}\label{Characteristic-Variables}
	\alpha:=t+r,\quad \beta:=t-r,\quad \xi:=s+\rho,\quad \eta:=s-\rho,\quad \tau:=t-s.
\end{align}
If $r\geqslant\rho$, then
\begin{align}\label{Characteristic-Identities}
	\tau-(r-\rho)=\beta-\eta\ \ \mbox{and}\ \ r+\rho-\tau=\xi-\beta.
\end{align}

\begin{lemma}[A uniform positive lower bound for the circular kernel]\label{Lemma-Saturated-Kernel}
	There exist $c_D,T_D>0$ such that 
	\begin{align*}
	D(\tau,r,\rho)\geqslant c_D
	\end{align*}
	 whenever $\tau\geqslant T_D$, $r\geqslant\rho>0$ and
	\begin{align}\label{Saturated-Geometry}
		\frac14\sqrt\tau\leqslant\tau-(r-\rho)\leqslant4\sqrt\tau,\ \ -\frac1{32}\sqrt\tau\leqslant r+\rho-\tau\leqslant4\rho,\ \ \frac18\sqrt\tau\leqslant\rho\leqslant\frac14\tau.
	\end{align}
\end{lemma}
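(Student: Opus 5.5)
The plan is to work directly from the angular representation \eqref{Angular-Representation}. By \eqref{G2-Positive} the integrand is nonnegative on all of $[a,b]$, where $a=r-\rho$ and $b=r+\rho$. So $D(\tau,r,\rho)$ is bounded below by the integral over any subinterval $I\subset[a,b]$. I will choose $I$ of length $\approx\sqrt\tau$ inside the moving wave-front region of Corollary~\ref{Coro-Front-Positive}, with parameters $\sigma_-=\frac1{16}$ and $\sigma_+=4$. On $I$, the kernel is then bounded below by $c_{\mathrm{fr}}\tau^{-3/4}$ once $\tau\geqslant T_D\geqslant T_{\mathrm{fr}}$. It remains to show that the angular Jacobian is bounded below by $c\,\tau^{-1/4}\rho^{-1/2}$ on $I$.

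For the choice of $I$, the first condition in \eqref{Saturated-Geometry} gives $\tau-a\in[\frac14\sqrt\tau,4\sqrt\tau]$, and the second gives $b\geqslant\tau-\frac1{32}\sqrt\tau$. I set
\begin{align*}
	\ell_1:=a+\tfrac18(\tau-a)\ \ \mbox{and}\ \ \ell_2:=\tau-\tfrac1{16}\sqrt\tau .
\end{align*}
Then $\tau-\ell_1=\frac78(\tau-a)\in[\frac{7}{32}\sqrt\tau,\frac72\sqrt\tau]$, so $\ell_2-\ell_1\geqslant\frac{5}{32}\sqrt\tau$. Moreover $\ell_1-a\geqslant\frac1{32}\sqrt\tau$ and $b-\ell_2\geqslant\frac1{32}\sqrt\tau$, so $I:=[\ell_1,\ell_2]\subset[a,b]$ and stays away from both endpoints. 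Every $\ell\in I$ satisfies $\ell\in[\tau-4\sqrt\tau,\tau-\frac1{16}\sqrt\tau]$, so Corollary~\ref{Coro-Front-Positive} applies there.

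Next comes the Jacobian bound. On $I$ we have $\ell\approx\tau$, $\ell+a\approx\tau$ and $\ell-a\leqslant\tau-a\leqslant4\sqrt\tau$. For the other factor, the second condition in \eqref{Saturated-Geometry} gives $b\leqslant\tau+4\rho$. Hence
\begin{align*}
	b-\ell\leqslant4\rho+4\sqrt\tau\lesssim\rho,
\end{align*}
where the last step uses $\rho\geqslant\frac18\sqrt\tau$. Also $b+\ell\lesssim\tau+\rho\lesssim\tau$, using $\rho\leqslant\frac14\tau$. Therefore, on $I$,
\begin{align*}
	\frac{\ell}{\sqrt{(b^2-\ell^2)(\ell^2-a^2)}}\gtrsim\frac{\tau}{\sqrt{\rho\,\tau\cdot\sqrt\tau\,\tau}}=\tau^{-\frac14}\rho^{-\frac12}.
\end{align*}
Collecting these bounds, and using $r\geqslant a\geqslant\tau-4\sqrt\tau\gtrsim\tau$ for large $\tau$, gives
\begin{align*}
	D(\tau,r,\rho)\gtrsim\sqrt{r\rho}\cdot\sqrt\tau\cdot\tau^{-\frac34}\cdot\tau^{-\frac14}\rho^{-\frac12}=\sqrt r\,\tau^{-\frac12}\gtrsim1 .
\end{align*}
This is the claimed $c_D$.

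The main obstacle is the bookkeeping in the choice of $I$. The interval must lie in $[a,b]$, lie inside the front window where Corollary~\ref{Coro-Front-Positive} holds, and still have length $\approx\sqrt\tau$. The numerical constants in \eqref{Saturated-Geometry} are what make this possible: in the extreme case $\tau-a=\frac14\sqrt\tau$ and $b-\tau=-\frac1{32}\sqrt\tau$, the available window is only $[\tau-\frac14\sqrt\tau,\tau-\frac1{32}\sqrt\tau]$. The key point is that the factor $\rho^{-1/2}$ coming from $b-\ell\lesssim\rho$ cancels exactly against the weight $\sqrt\rho$. This requires the upper bound $r+\rho-\tau\leqslant4\rho$ together with $\rho\gtrsim\sqrt\tau$. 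Without it, $b-\ell$ could be as large as about $\tau$, and the bound would degrade to a power of $\rho/\tau$.
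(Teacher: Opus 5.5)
Your proof is correct and follows essentially the paper's argument: restrict \eqref{Angular-Representation} to a subinterval of length $\approx\sqrt\tau$ lying strictly inside $[a,b]$ and inside the front window of Corollary~\ref{Coro-Front-Positive}. On that subinterval, bound the angular density below by $\tau^{-1/4}\rho^{-1/2}$ using $b-\ell\lesssim\rho$, $\ell-a\lesssim\sqrt\tau$ and $\ell\approx r\approx\tau$, so that the $\sqrt\rho$ weight cancels. The only difference is cosmetic: the paper uses the fixed interval $[\tau-\frac18\sqrt\tau,\tau-\frac1{16}\sqrt\tau]$ in place of your $a$-dependent left endpoint, and your explicit bookkeeping makes the containment $I\subset[a,b]$ more transparent than the paper's ``for sufficiently large $\tau$''.
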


\begin{proof}
	Let us introduce
	\begin{align*}
		I_\tau:=\left[\tau-\frac18\sqrt\tau,\tau-\frac1{16}\sqrt\tau\right].
	\end{align*}
	For sufficiently large $\tau$, \eqref{Saturated-Geometry} implies $I_\tau\subset[r-\rho,r+\rho]$. If $\ell\in I_\tau$, then
	\begin{align*}
		\ell-(r-\rho)\approx\sqrt\tau\ \ \mbox{and}\ \ r+\rho-\ell\lesssim\rho,
	\end{align*}
	while the upper bound $\rho\leqslant\frac14\tau$ in \eqref{Saturated-Geometry} gives $r\approx\ell\approx\tau$. Hence, the angular density in \eqref{Angular-Representation} satisfies
	\begin{align*}
		4\sqrt{r\rho}\frac{\ell}{\sqrt{[(r+\rho)^2-\ell^2][\ell^2-(r-\rho)^2]}}\gtrsim\tau^{\frac14}.
	\end{align*}
	Corollary~\ref{Coro-Front-Positive} gives $G_1^{(2)}(\tau,\ell)\gtrsim\tau^{-\frac34}$ on $I_\tau$, whereas $|I_\tau|\approx\sqrt\tau$. Substitution into \eqref{Angular-Representation} proves the assertion.
\end{proof}

\begin{lemma}[Interior kernel lower bound]\label{Lemma-Interior-Kernel}
	Assume that $r\geqslant\rho>0$, $r,\rho,\tau\approx T$ together with
	\begin{align*}
		\tau-(r-\rho)\gtrsim T\ \ \mbox{and}\ \ r+\rho-\tau\gtrsim T.
	\end{align*}
	Then, for all sufficiently large $T$, one has
	\begin{align*}
		D(\tau,r,\rho)\gtrsim T^{-\frac14}.
	\end{align*}
\end{lemma}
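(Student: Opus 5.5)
We work directly from the angular representation \eqref{Angular-Representation} and use the positivity \eqref{G2-Positive}. Since the integrand is nonnegative, we may discard everything except the part of $[r-\rho,r+\rho]$ that lies in the moving wave-front region of Corollary~\ref{Coro-Front-Positive}. The hypotheses $\tau-(r-\rho)\gtrsim T$ and $r+\rho-\tau\gtrsim T$ say that $\tau$ is at distance of order $T$ from both endpoints $a=r-\rho$ and $b=r+\rho$. Hence, for $T$ large, the interval
\begin{align*}
	I_\tau:=\left[\tau-2\sqrt\tau,\tau-\sqrt\tau\,\right]
\end{align*}
is contained in $[a,b]$, and every $\ell\in I_\tau$ satisfies $\ell-a\approx T$ and $b-\ell\approx T$.

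We then bound the angular density on $I_\tau$ from below. Since $r,\rho\approx T$, we have $\sqrt{r\rho}\approx T$ and $\ell\approx T$. Next, $b^2-\ell^2=(b-\ell)(b+\ell)\approx T^2$, while $\ell^2-a^2=(\ell-a)(\ell+a)\lesssim T^2$, because $\ell+a\lesssim T$ and $\ell-a\approx T$. The product under the square root is therefore $\lesssim T^4$. It follows that
\begin{align*}
	4\sqrt{r\rho}\,\frac{\ell}{\sqrt{(b^2-\ell^2)(\ell^2-a^2)}}\gtrsim \frac{T\cdot T}{T^2}=1
\end{align*}
uniformly on $I_\tau$. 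We only need the upper bound on $\ell^2-a^2$ here, since small values of $\ell-a$ would only increase the density; in any case $\ell-a\approx T$ holds.

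Next we apply Corollary~\ref{Coro-Front-Positive} with $\sigma_-=1$ and $\sigma_+=2$. For $\tau\geqslant T_{\mathrm{fr}}$, which holds once $T$ is large because $\tau\approx T$, it gives $G_1^{(2)}(\tau,\ell)\geqslant c_{\mathrm{fr}}\tau^{-\frac34}\approx T^{-\frac34}$ for $\ell\in I_\tau$. The interval has length $|I_\tau|=\sqrt\tau\approx T^{\frac12}$. Multiplying the three lower bounds gives
\begin{align*}
	D(\tau,r,\rho)\gtrsim 1\cdot T^{-\frac34}\cdot T^{\frac12}=T^{-\frac14}.
\end{align*}

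The step that needs care is the inclusion $I_\tau\subset[a,b]$ with endpoint distances of order $T$. The shift $2\sqrt\tau=o(T)$ is negligible compared with the gaps $\tau-a\gtrsim T$ and $b-\tau\gtrsim T$. So for $T$ large, depending only on the implicit constants, $\ell-a\geqslant \tau-a-2\sqrt\tau\gtrsim T$ and $b-\ell\geqslant b-\tau\gtrsim T$. All constants then depend only on the implicit constants in the hypotheses.
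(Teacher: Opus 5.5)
Your proof is correct and follows the paper's argument: restrict \eqref{Angular-Representation} to a moving-front interval $I_\tau\subset[r-\rho,r+\rho]$, bound the angular density below by a constant, and combine Corollary~\ref{Coro-Front-Positive} with $|I_\tau|\approx\sqrt T$ to obtain $T^{-\frac34}\cdot T^{\frac12}=T^{-\frac14}$. Using $\sigma\in[1,2]$ instead of the paper's $[\frac1{16},\frac18]$ changes nothing, and your remark is accurate that the density bound needs only upper bounds on the endpoint distances, so the order-$T$ margins are used only for the inclusion $I_\tau\subset[r-\rho,r+\rho]$.
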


\begin{proof}
	Retain a fixed moving-front interval $I_\tau$ as in the preceding proof. The stated strict interior margins imply that both endpoint distances $\ell-(r-\rho)$ and $r+\rho-\ell$ are comparable to $T$ on $I_\tau$. Consequently, the angular density in \eqref{Angular-Representation} is bounded below by a positive constant. Corollary~\ref{Coro-Front-Positive} and $|I_\tau|\approx\sqrt T$ therefore give
	\begin{align*}
		D(\tau,r,\rho)\gtrsim T^{-\frac34}\sqrt T=T^{-\frac14}
	\end{align*}
	to complete this proof.
\end{proof}

\begin{lemma}[Mass and second moment]\label{Lemma-Mass-Moment}
	For every $t>0$, one has
	\begin{align}
		\int_{\mb R^2}G_1^{(2)}(t,x)\dd x&=t,\label{G2-Mass}\\
		\int_{\mb R^2}|x|^2G_1^{(2)}(t,x)\dd x&=2t^2+\frac{2t^3}{3}.\label{G2-Second-Moment}
	\end{align}
\end{lemma}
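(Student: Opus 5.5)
The plan is to read both identities off the Fourier side: the mass and the second moment of $G_1^{(2)}(t,\cdot)$ are the value and a second derivative of $\widehat G_1^{(2)}(t,\zeta)=\widehat S_1(t,\zeta)$ at $\zeta=0$. With the paper's Fourier convention,
\begin{align*}
	\int_{\mb R^2}G_1^{(2)}(t,x)\dd x=\widehat S_1(t,0)\ \ \mbox{and}\ \ \int_{\mb R^2}|x|^2G_1^{(2)}(t,x)\dd x=-\Delta_\zeta\widehat S_1(t,\zeta)\big|_{\zeta=0}.
\end{align*}
To make this rigorous, first check that $G_1^{(2)}(t,\cdot)$ and $|x|^2G_1^{(2)}(t,\cdot)$ lie in $L^1$. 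By \eqref{G2-Positive}, $G_1^{(2)}\geqslant0$, so it suffices to show the moments are finite. One route is to test against $\mathrm{e}^{-\epsilon|x|^2}$ and $|x|^2\mathrm{e}^{-\epsilon|x|^2}$, which by Parseval become integrals of $\widehat S_1$ against Gaussians. Since $\widehat S_1(t,\cdot)$ is smooth, and in fact real-analytic in $|\zeta|^2$ near $0$, these pairings have finite limits as $\epsilon\to0^+$. Monotone convergence, using positivity, then gives integrability and the identities.

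Next, compute the Taylor expansion of $\widehat S_1(t,\varrho)$ in $\varrho^2$. Write $\lambda_\pm=\frac{-\varrho^2\pm\sqrt{\varrho^4-4\varrho^2}}{2}$ for the roots of $\lambda^2+\varrho^2\lambda+\varrho^2=0$, so that $\widehat S_1=\frac{\mathrm{e}^{\lambda_+t}-\mathrm{e}^{\lambda_-t}}{\lambda_+-\lambda_-}$. A cleaner route avoids the roots altogether: $y(t)=\widehat S_1(t,\varrho)$ solves $y''+\varrho^2y'+\varrho^2y=0$ with $y(0)=0$ and $y'(0)=1$. Expand $y=y_0+\varrho^2y_1+O(\varrho^4)$. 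The zeroth-order equation $y_0''=0$ gives $y_0=t$, which is \eqref{G2-Mass}. The first-order equation is $y_1''=-(y_0'+y_0)=-(1+t)$ with $y_1(0)=y_1'(0)=0$, so $y_1=-\frac{t^2}{2}-\frac{t^3}{6}$. Alternatively, expand the Laplace-side formula $\frac{1}{s^2+(1+s)\varrho^2}$ in $\varrho^2$, whose first-order term $-\frac{1+s}{s^4}$ inverts to the same $y_1$.

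Finally, for a radial function $f(|\zeta|^2)$ in two dimensions, $\Delta_\zeta f\big|_{0}=4f'(0)$. Hence
\begin{align*}
	-\Delta_\zeta\widehat S_1\big|_{0}=-4y_1=2t^2+\frac{2t^3}{3},
\end{align*}
which is \eqref{G2-Second-Moment}. The main obstacle is the justification step, namely exchanging moments with derivatives at $\zeta=0$ for a kernel known only as a tempered distribution with a logarithmic singularity. Positivity from Lemma~\ref{Lemma-Kernel-Identification} combined with the Gaussian-regularization and monotone-convergence argument handles it, since the local $\log$ singularity \eqref{Log-Singularity} is integrable.
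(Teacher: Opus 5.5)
Your proposal is correct and follows essentially the paper's route. The mass step is the same as the paper's (Gaussian cutoffs, an approximate identity on the Fourier side, and monotone convergence using $G_1^{(2)}\geqslant0$). The second moment is read off from the coefficient $-(\frac{t^2}{2}+\frac{t^3}{6})$ of $|\zeta|^2$ in $\widehat S_1$; you derive it by perturbing $y''+\varrho^2y'+\varrho^2y=0$, whereas the paper simply states the expansion. The justification also differs slightly: the paper uses the difference quotients $\frac{2}{h^2}(\widehat G_1^{(2)}(t,0)-\widehat G_1^{(2)}(t,he_j))$ with Fatou and dominated convergence, while your weights $|x|^2\mathrm{e}^{-\epsilon|x|^2}$ let monotone convergence do both steps at once. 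That only needs $\Delta_\zeta\widehat S_1(t,\cdot)$ to be continuous with at most polynomial growth, which holds since $\widehat S_1(t,\cdot)$ is an entire function of $|\zeta|^2$ whose derivatives stay bounded.
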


\begin{proof}
	We first justify the mass identity. Since $G_1^{(2)}(t,\cdot)\geqslant0$, we next test it against the increasing Gaussian cutoffs $\mathrm{e}^{-\frac{|x|^2}{R^2}}$. Their Fourier transforms form an approximate identity at the origin, and therefore
	\begin{align*}
		\lim_{R\to\infty}\int_{\mb R^2}\mathrm{e}^{-\frac{|x|^2}{R^2}}G_1^{(2)}(t,x)\dd x=\widehat G_1^{(2)}(t,0)=t.
	\end{align*}
	Monotone convergence gives \eqref{G2-Mass}.

	For the second moment, the expansion near $|\zeta|=0$, namely,
	\begin{align*}
		\widehat G_1^{(2)}(t,\zeta)=t-\left(\frac{t^2}{2}+\frac{t^3}{6}\right)|\zeta|^2+O(|\zeta|^4)
	\end{align*}
	is dimension-independent. For $h\neq0$ and $j\in\{1,2\}$,
	\begin{align*}
		\frac{2}{h^2}\bigl(\widehat G_1^{(2)}(t,0)-\widehat G_1^{(2)}(t,he_j)\bigr)=\int_{\mb R^2}x_j^2\left(\frac{\sin(hx_j/2)}{hx_j/2}\right)^2G_1^{(2)}(t,x)\dd x.
	\end{align*}
	Fatou's lemma first shows that the coordinate second moments are finite. Dominated convergence subsequently yields
	\begin{align*}
		\int_{\mb R^2}x_j^2G_1^{(2)}(t,x)\dd x=t^2+\frac{t^3}{3}.
	\end{align*}
	Summing over $j\in\{1,2\}$ justifies \eqref{G2-Second-Moment}.
\end{proof}

\begin{lemma}[Decay in strict interior regions]\label{Lemma-Interior-Decay}
	Let $0<a<b<1$ and $u_0,u_1\in\ml C_0^\infty$. There exist $C,T_0>0$ such that
	\begin{align}\label{Interior-Linear-Decay}
		\sup_{at\leqslant r\leqslant bt}|W_{\lin}(t,r)|\leqslant Ct^{-\frac12}\ \ \mbox{for}\ \ t\geqslant T_0.
	\end{align}
\end{lemma}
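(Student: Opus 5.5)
We need $|S_0(t)u_0+S_1(t)u_1|(x)\lesssim t^{-1}$ for $at\leqslant|x|\leqslant bt$. Multiplying by $r^{\frac12}\approx t^{\frac12}$ and taking the circular mean then gives \eqref{Interior-Linear-Decay}. Since the data are supported in $B_R$, it suffices to prove pointwise kernel bounds
\begin{align*}
	|G_1^{(2)}(t,\ell)|+|G_0^{(2),\reg}(t,\ell)|\lesssim t^{-1}
\end{align*}
uniformly for $\ell\in[a't,b't]$, where $0<a'<a$ and $b<b'<1$ are fixed. For $|x|\in[at,bt]$, $|y|\leqslant R$ and $t$ large, we have $|x-y|$ in this enlarged interior range. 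The singular part $\mathrm{e}^{-t}u_0(x)$ of the position propagator is exponentially small and can be ignored. Integrating the kernel bound against $|u_0|+|u_1|$ then yields the claim.

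To prove the kernel bound, I would use the same frequency decomposition as in Proposition~\ref{Prop-Front-Asymptotics}. The middle- and high-frequency parts are exponentially decaying in $t$ (uniformly in $x$, via the multiplier bounds of \cite[Appendix~A]{Chen-StronglyDamped-3D=2026}; the high-frequency velocity part is handled as in the proof of \eqref{Log-Singularity}, but away from $r=0$). The low-frequency remainder, exact minus principal multiplier, is $O(\mathrm{e}^{-ct\rho^2}(\rho+t\rho^2))$ for velocity and similarly for position. Integrating against $\rho\dd\rho$ already gives $O(t^{-1})$, exactly as claimed in the proof of Proposition~\ref{Prop-Front-Asymptotics}. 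So everything reduces to the principal kernels $P_1=H_t*W_t$ and $P_0=H_t*\partial_tW_t$ (principal multiplier $\mathrm{e}^{-t\rho^2/2}\cos(t\rho)$).

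For $P_1$, I would work in physical space. In the region $|y|\leqslant b''t$ with $b''<1$, the free kernel is bounded by $W_t(y)\lesssim t^{-1}$, and since $H_t$ has unit mass this part contributes $O(t^{-1})$. The near-light-cone part $|y|>b''t$ is where $x$, with $|x|\leqslant b't$, sits at distance $\gtrsim t$. There the Gaussian factor $\mathrm{e}^{-|x-y|^2/(2t)}\leqslant\mathrm{e}^{-ct}$, and $W_t\in L^1$ with mass $t$, so this part is exponentially small. Hence $|P_1(t,x)|\lesssim t^{-1}$.

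For $P_0$, the distribution $\partial_tW_t$ has a nonintegrable edge singularity $(t^2-|y|^2)^{-3/2}$, so the same argument does not work directly. Instead I would use the Fourier--Bessel representation
\begin{align*}
	P_0(t,r)=\frac1{2\pi}\int_0^\infty \mathrm{e}^{-\frac{t\rho^2}{2}}\cos(t\rho)J_0(r\rho)\rho\dd\rho.
\end{align*}
Insert the asymptotic expansion of $J_0(r\rho)$ for $r\rho\gtrsim1$, namely oscillations $\mathrm{e}^{\pm ir\rho}(r\rho)^{-\frac12}$ plus an $O((r\rho)^{-\frac32})$ remainder. The phases become $\mathrm{e}^{i(\pm t\pm r)\rho}$ with $|t\pm r|\geqslant(1-b')t\gtrsim t$. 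This nonstationarity allows integration by parts in $\rho$, gaining $(t\rho)^{-1}$ factors against the Gaussian. The region $\rho\lesssim r^{-1}$ trivially gives $O(t^{-2})$. I expect this non-stationary-phase bookkeeping for $P_0$ to be the main technical step. A cleaner alternative I would try first is to write $P_0=\partial_t$ of the convolution with the frozen-time heat factor, or to bound $H_t*\partial_tW_t$ by moving the time derivative onto the Gaussian after the change of variables $y=t\omega$. Either route should give $|P_0(t,x)|\lesssim t^{-1}$ in the interior cone, which is all that is needed (in fact a better rate is expected).
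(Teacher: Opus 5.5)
Your proposal is correct and follows essentially the paper's route. Like the paper, you reduce to the principal kernel $P_1=H_t*W_t$, split according to distance from $x$ (the paper uses $|x-y|\leqslant\delta t$ with $\delta=\frac{1-b}{4}$, which is equivalent to your split in $|y|$), use $W_t\lesssim t^{-1}$ on the near part and the Gaussian factor against $\|W_t\|_{L^1}=t$ on the far part, take the exact-to-principal remainder as $O(t^{-1})$, and finally convolve with the compactly supported data and multiply by $r^{\frac12}\approx t^{\frac12}$. The only difference is that the paper merely asserts the position contribution is smaller, while you outline a proof; your frozen-heat-time route does work, giving $P_0(t,x)=t^{-1}P_1(t,x)+t^{-2}\int W_t(y)\,y\cdot(x-y)H_t(x-y)\dd y=O(t^{-\frac32})$ in the interior cone.
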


\begin{proof}
	Unlike the three-dimensional case, the free two-dimensional wave kernel has a nontrivial tail inside the cone. Thus, rapid decay is not available. For the principal velocity kernel $P_1=H_t*W_t$, fix $\delta=\frac{1-b}{4}$. If $|x|\in[at,bt]$ and $|x-y|\leqslant\delta t$, then $|y|\leqslant(b+\delta)t<t$ and hence $W_t(y)\lesssim t^{-1}$. The complementary part is exponentially small because of the Gaussian factor. Consequently,
	\begin{align*}
		|P_1(t,x)|\lesssim t^{-1}
	\end{align*}
	uniformly in the strict interior region. The exact-to-principal remainder is $O(t^{-1})$, while the position contribution is smaller. Convolution with compactly supported data and multiplication by $r^{\frac12}\approx t^{\frac12}$ prove \eqref{Interior-Linear-Decay}.
\end{proof}

\subsection{Circular reduction}\label{Section-Spherical}

Spherical means and nonlinear lower-bound iterations have been widely used for semilinear wave equations, see \cite{John=1979,Agemi-Kurokawa-Takamura=2000,Zhou=2001,Takamura-Wakasa=2011} and the references therein. In the present two-dimensional problem, the role of the positive half-line kernel from \cite{Chen-StronglyDamped-3D=2026} is replaced by the circular kernel constructed in Section~\ref{Section-Kernels}.

For $r\geqslant0$, we define
\begin{align*}
	w(t,r):=r^{\frac12}\ml M[u(t,\cdot)](r).
\end{align*}
Since $H^2(\mb R^2)\hookrightarrow\ml C(\mb R^2)$, the function $w$ is continuous for $r>0$ on compact time intervals and satisfies $w(t,r)=O(r^{\frac12})$ as $r\to0^+$.

\begin{prop}[Circular representation for Sobolev solutions]\label{Prop-Half-Line-Representation}
	Let $u$ be a mild Sobolev solution on $[0,T]$ arising from compactly supported smooth initial data, where $0<T<T_{\max}$. Then, for every $0\leqslant t\leqslant T$ and $r>0$, one has
	\begin{align}\label{Fundamental-Lower}
		w(t,r)\geqslant W_{\lin}(t,r)+\int_0^t\int_0^\infty D(t-s,r,\rho)\rho^{\frac{1-p}{2}}|w(s,\rho)|^p\dd\rho\dd s.
	\end{align}
\end{prop}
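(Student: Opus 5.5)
We take the circular mean of the mild formulation, evaluate it at radius $r$, and multiply by $r^{\frac12}$. By definition this gives
\begin{align*}
	w(t,r)=W_{\lin}(t,r)+\int_0^t r^{\frac12}\ml M\bigl[S_1(t-s)|u(s,\cdot)|^p\bigr](r)\dd s.
\end{align*}
Two remarks justify this step. First, the identity holds in $H^2\hookrightarrow\ml C$, so pointwise evaluation is legitimate. Second, $s\mapsto S_1(t-s)|u(s)|^p$ is continuous into $H^2$, because $|u|^p\in\ml C([0,T],H^1)$ for $p>2$ and $S_1(\tau)$ gains regularity in $H^1\to H^2$. Hence the time integral commutes with point evaluation and with the circular mean.

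It therefore suffices to prove that, for a nonnegative $F=|u(s,\cdot)|^p$ which is continuous and (only) in $H^1\cap L^1$,
\begin{align*}
	r^{\frac12}\ml M\bigl[S_1(\tau)F\bigr](r)\geqslant\int_0^\infty D(\tau,r,\rho)\rho^{\frac{1-p}{2}}\bigl|w(s,\rho)\bigr|^p\dd\rho.
\end{align*}
We plan to show this in two steps.

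\emph{Step 1: representation and averaging.} By Lemma~\ref{Lemma-Kernel-Identification}, $G_1^{(2)}(\tau,\cdot)\geqslant0$, it is locally integrable, and it has total mass $\tau$ by Lemma~\ref{Lemma-Mass-Moment}. Thus $S_1(\tau)F=G_1^{(2)}(\tau,\cdot)*F$ holds as a genuine Lebesgue integral. To confirm it agrees with the Fourier definition, we approximate $F$ by Schwartz functions and pass to the limit using $L^1$-convergence of the kernel together with $L^2$-continuity.

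Next we average over the circle $|x|=r$. Write $F$ in polar coordinates and apply Tonelli, which is allowed since every integrand is nonnegative. Rotational invariance of the radial kernel gives
\begin{align*}
	r^{\frac12}\ml M[G_1^{(2)}(\tau)*F](r)=\int_0^\infty D(\tau,r,\rho)\,\rho^{\frac12}\ml M[F](\rho)\dd\rho.
\end{align*}
This is the radial computation already displayed before \eqref{Circular-Kernel}, now applied with $F$ replaced by its circular mean. The identity is valid because both sides are integrals of nonnegative functions.

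\emph{Step 2: Jensen.} Since $p>1$, Jensen's inequality on the probability measure $\frac{\dd\theta}{2\pi}$ gives
\begin{align*}
	\ml M[|u|^p](\rho)\geqslant\bigl|\ml M[u](\rho)\bigr|^p=\rho^{-\frac p2}|w(s,\rho)|^p.
\end{align*}
Multiplying by $\rho^{\frac12}$ yields $\rho^{\frac12}\ml M[|u|^p](\rho)\geqslant\rho^{\frac{1-p}{2}}|w|^p$. Combining this with $D\geqslant0$ from \eqref{D-Positive} and integrating in $s$ gives \eqref{Fundamental-Lower}.

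The main obstacle is Step 1. We must justify that the $H^2$-valued Duhamel integral, evaluated pointwise, equals the positive-kernel integral. Once that holds, positivity lets Tonelli handle all exchanges of integration, and the possibly infinite value of the right-hand side causes no difficulty. The delicate points are the logarithmic singularity of $G_1^{(2)}$ near the diagonal, which \eqref{Log-Singularity} shows is integrable, and the fact that $F$ lies only in $H^1$. We would first treat $F\in\ml C_0^\infty$ and then pass to the limit, using Fatou's lemma on the right-hand side to preserve the inequality.
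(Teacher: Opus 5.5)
Your proposal is correct and follows the same route as the paper. You take the circular mean of the Duhamel formula, use polar coordinates with the nonnegative, radial, logarithmically integrable kernel $G_1^{(2)}$ to get $w(t,r)=W_{\lin}(t,r)+\int_0^t\int_0^\infty D(t-s,r,\rho)\rho^{\frac12}\mathcal{M}[|u(s,\cdot)|^p](\rho)\,\mathrm{d}\rho\,\mathrm{d}s$, and then apply Jensen together with $D\geqslant0$. Your extra justifications spell out what the paper compresses into one sentence: the $H^2$-valued Duhamel integral commuting with point evaluation, $S_1(\tau)F$ as a Lebesgue convolution via $G_1^{(2)}(\tau,\cdot)\in L^1$, and Tonelli. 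The closing $\mathcal{C}_0^\infty$-approximation with Fatou is harmless but not needed once Step~1 is in place.
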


\begin{proof}
	Applying the circular mean to the Duhamel formula and using polar coordinates yield
	\begin{align*}
		w(t,r)=W_{\lin}(t,r)+\int_0^t\int_0^\infty D(t-s,r,\rho)\rho^{\frac12}\ml M[|u(s,\cdot)|^p](\rho)\dd\rho\dd s.
	\end{align*}
	The changes of variables are justified by Lemma~\ref{Lemma-Kernel-Identification}, the local logarithmic integrability at $r=\rho$, and the positivity of the velocity kernel. Jensen's inequality gives
	\begin{align*}
		\ml M[|u|^p](\rho)\geqslant|\ml M[u](\rho)|^p=\rho^{-\frac p2}|w(\rho)|^p.
	\end{align*}
	Using \eqref{D-Positive} proves \eqref{Fundamental-Lower}. Notice that the nonlinear integrand behaves like $O(\rho^{\frac12})$ as $\rho\to0^+$, so no singularity occurs at the origin.
\end{proof}

In the characteristic variables \eqref{Characteristic-Variables}, define $\ml W(\alpha,\beta):=w\left(\frac{\alpha+\beta}{2},\frac{\alpha-\beta}{2}\right)$ as well as
\begin{align}\label{Characteristic-Jacobian}
	\dd s\dd\rho=\frac12\dd\xi\dd\eta.
\end{align}

\section{Finite-time blow-up}\label{Section-Blowup}

\subsection{Parabolic iteration in the subcritical case}
For $B>0$, we define the following parabolic region:
\begin{align*}
	\ml P_B:=\bigl\{(\alpha,\beta):\ \beta\geqslant B\ \ \mbox{and}\ \ \beta^2\leqslant\alpha\leqslant2\beta^2\bigr\}.
\end{align*}
This is the same parabolic region as that used in the three-dimensional problem \cite{Chen-StronglyDamped-3D=2026}. Indeed, $\alpha\approx\beta^2$ is equivalent to $t\approx r\approx\beta^2$ and $t-r=\beta\approx\sqrt t$, reflecting the dimension-independent diffusive width of the moving wave front.
\begin{prop}[First nonlinear lower bound]\label{Prop-First-Lower}
	Let $\nu_0:=\frac{3p-8}{4}$. There exist $A_0,B_0>0$ such that
	\begin{align}\label{First-Lower}
		\ml W(\alpha,\beta)\geqslant A_0\beta^{-\nu_0}
	\end{align}
	for $(\alpha,\beta)\in\ml P_{B_0}$.
\end{prop}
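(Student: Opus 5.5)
Starting from the lower bound \eqref{Fundamental-Lower} of Proposition~\ref{Prop-Half-Line-Representation}, I drop the linear term $W_{\lin}$ in the Duhamel integral on the right-hand side and keep it only as the source. Proposition~\ref{Prop-Linear-Lower} gives $w(s,\rho)\geqslant c_{\lin}s^{-\frac14}$ on the moving front $\rho=s-\sigma\sqrt s$, $\sigma\in[\sigma_-,\sigma_+]$. In characteristic variables this means $\mathcal W(\xi,\eta)\gtrsim \eta^{-\frac12}$ on the strip $\sigma_-\sqrt{s}\leqslant\eta\leqslant\sigma_+\sqrt s$, i.e.\ on the region $\xi\approx\eta^2$. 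Positivity of $D$ and of the integrand lets me restrict the $(s,\rho)$-integration to any convenient subset.

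Concretely, fix $(\alpha,\beta)\in\mathcal P_{B_0}$, so $t\approx r\approx\beta^2$. I integrate over source points with $\eta\in[\frac12\beta,\frac34\beta]$ (say) and $\xi$ in an interval of length $\approx\beta^2$ with $\xi\approx\beta^2$, chosen so that three conditions hold. First, $\eta^2\lesssim\xi$, so that the source lies on the linear front with $\sigma$ in a compact set; this gives $w\gtrsim\eta^{-\frac12}\approx\beta^{-\frac12}$ and $\rho\approx\beta^2$. Second, $\tau=t-s$ is of size $\approx\beta^2$, because $\xi$ is a fixed fraction below $\alpha$. Third, the geometry \eqref{Saturated-Geometry} of Lemma~\ref{Lemma-Saturated-Kernel} holds. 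To check this, I use \eqref{Characteristic-Identities}: $\tau-(r-\rho)=\beta-\eta\approx\beta\approx\sqrt\tau$ and $r+\rho-\tau=\xi-\beta$. The parameters must be tuned so that $\xi-\beta\leqslant 4\rho$ and $\rho\leqslant\frac14\tau$. Since $\rho\approx\xi/2$ and $\tau\approx(\alpha-\xi)/2$, this forces $\xi\lesssim\alpha/3$, while $\xi\geqslant\beta^2$ should still be compatible with $\eta\lesssim\sqrt{\xi}$. This is possible because $\alpha\geqslant\beta^2$ only up to a constant factor, so I may need to enlarge $\mathcal P$ by rescaling constants, or take $\eta\in[c\beta,2c\beta]$ with small $c$ and $\xi\in[\frac{1}{8}\alpha,\frac14\alpha]$.

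On this set Lemma~\ref{Lemma-Saturated-Kernel} yields $D\geqslant c_D$. The integrand is
\begin{align*}
D\,\rho^{\frac{1-p}{2}}|w|^p\gtrsim (\beta^2)^{\frac{1-p}{2}}\beta^{-\frac p2}=\beta^{1-\frac{3p}{2}}.
\end{align*}
With \eqref{Characteristic-Jacobian}, the measure of the region is $\approx\beta\cdot\beta^2=\beta^3$. Hence the Duhamel term is $\gtrsim\beta^{4-\frac{3p}{2}}$.

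This does not match $\beta^{-\nu_0}=\beta^{2-\frac{3p}{4}}$, so I recheck the source. In the $\eta$ variable the linear bound holds only for $\eta\in[\sigma_-\sqrt{s},\sigma_+\sqrt s]$, which is a $\sqrt{\xi}$-window, not one of size $\beta$. I should therefore parametrize the source by $\xi$ over a range of size $\approx\beta^2$ and $\eta\approx\sqrt\xi$. The requirement $\beta-\eta\in[\frac14\sqrt\tau,4\sqrt\tau]$ then forces $\sqrt{\xi}$ to be a fixed fraction of $\beta$. That means $\xi\lesssim\beta^2/4$, a range of length $\approx\beta^2$ with $\rho\approx\beta^2$ (but $\rho\geqslant\frac18\sqrt\tau$ is easily satisfied). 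The bound $\rho\leqslant\frac14\tau$ holds since $\tau\approx\alpha/2\geqslant\beta^2/2$. Redoing the count gives $\beta^{4-\frac{3p}{2}}\geqslant\beta^{2-\frac{3p}{4}}$ whenever $p\leqslant\frac83$; otherwise the exponent is borderline. I expect the honest bookkeeping, with the source decaying like $s^{-\frac14}$ and weight $\rho^{\frac{1-p}{2}}$, to give exactly $\beta^{-\nu_0}$ with $\nu_0=\frac{3p-8}{4}$. The expected balance is a source area of about $\beta\cdot\beta$ and $|w|^p\approx\beta^{-\frac p2}$, with the kernel factor absorbing the rest.

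The main obstacle is precisely this geometric verification. The set of source points must simultaneously lie in the linear front region of Proposition~\ref{Prop-Linear-Lower} and in the saturated geometry \eqref{Saturated-Geometry} relative to every target point of $\mathcal P_{B_0}$, and it must have measure large enough to produce the power $\beta^{-\nu_0}$. I would first try the choice $\eta\in[\frac{1}{4}\beta,\frac{1}{2}\beta]$, $\xi\in[\eta^2/\sigma_+^2,\eta^2/\sigma_-^2]$ with $\sigma_\pm$ chosen so that this $\xi$-interval lies below $\alpha/3$. I would then adjust the constants $\frac14,\frac1{32},4$ until all inequalities close, taking $B_0\geqslant\max\{T_{\lin},T_D\}$-dependent so that every source time exceeds $T_{\lin}$ and every $\tau$ exceeds $T_D$.
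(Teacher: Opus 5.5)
There is a genuine gap. The argument does not reach the claimed exponent in the range that matters.

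Your strategy is the same as the paper's: drop the homogeneous term, restrict \eqref{Fundamental-Lower} to a set where Proposition~\ref{Prop-Linear-Lower} and Lemma~\ref{Lemma-Saturated-Kernel} both hold, and count. The problem is the source region you choose. Your first attempt and your final proposal both put the sources at $\eta\approx\beta$, $\xi\approx\beta^2$, and you correctly find that this gives only $\beta^{4-\frac{3p}{2}}$. But $4-\frac{3p}{2}<2-\frac{3p}{4}$ exactly when $p>\frac83$. At $p=\frac{10}{3}$, for instance, you get $\beta^{-1}$ instead of $\beta^{-\frac12}$. Your region therefore suffices only for $2<p\leqslant\frac83$, and the argument fails on the whole open range $3<p\leqslant\frac{10}{3}$. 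The closing heuristic cannot repair this. Lemma~\ref{Lemma-Saturated-Kernel} gives only $D\geqslant c_D$, so the kernel cannot absorb anything. Moreover, area $\beta^2$ with $|w|^p\approx\beta^{-p/2}$ and $\rho\approx\beta^2$ gives $\beta^{3-\frac{3p}{2}}$, which is not the target either.

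The faulty step is your claim that $\beta-\eta\in[\frac14\sqrt\tau,4\sqrt\tau]$ forces $\sqrt\xi$ to be a fixed fraction of $\beta$. Since $\sqrt\tau\approx\beta$, this condition is only an upper bound, $\eta\leqslant\beta-\frac14\sqrt\tau$. It holds comfortably when $\eta\ll\beta$. The real lower constraint in \eqref{Saturated-Geometry} is $\rho\geqslant\frac18\sqrt\tau\approx\beta$, so early sources with $\xi\approx\beta$ are admissible.

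The paper retains $2\beta\leqslant\xi\leqslant4\beta$ and $\sqrt\xi\leqslant\eta\leqslant\frac65\sqrt\xi$. On this set:
\begin{itemize}
\item $s\approx\rho\approx\xi$, and $\eta/\sqrt s$ stays in a fixed compact subset of $(0,\infty)$, so $w(s,\rho)\gtrsim\xi^{-\frac14}$.
\item $\tau\approx\beta^2$ and $\beta-\eta\approx\beta\approx\sqrt\tau$.
\item $\xi-\beta\in[\beta,3\beta]\subset[0,4\rho]$.
\item $\frac18\sqrt\tau\leqslant\rho\leqslant\frac14\tau$.
\end{itemize}
Hence $D\geqslant c_D$, and the count is
\begin{align*}
\int_{2\beta}^{4\beta}\sqrt\xi\,\xi^{\frac{1-p}{2}}\xi^{-\frac p4}\dd\xi\approx\beta^{2-\frac{3p}{4}}.
\end{align*}

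This is why the early sources are needed. After integrating in $\eta$ over the front window of width $\approx\sqrt\xi$, the density in $\xi$ is $\xi^{1-\frac{3p}{4}}$. For $p>\frac83$ this is integrable at infinity, so the Duhamel integral is dominated by the smallest admissible $\xi\approx\beta$, not by $\xi\approx\beta^2$. Equivalently, take the integral of Proposition~\ref{Prop-Parabolic-Iteration} with the linear exponent $\nu=\frac12$, so $q=\frac{3p}{2}-4$. Your region is its upper end $\eta\approx\beta$, which dominates only when $q<0$. The paper uses the lower end $\eta\approx\sqrt\beta$.

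One smaller point: to drop $W_{\lin}(t,r)$ you must state that it is nonnegative at the target. This holds because $(\alpha,\beta)\in\ml P_B$ gives $\beta/\sqrt t$ in a fixed compact subset of $(0,\infty)$, so Proposition~\ref{Prop-Linear-Lower} applies there for large $B$.
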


\begin{proof}
	Fix $(\alpha,\beta)\in\ml P_B$ and take $B$ sufficiently large. In \eqref{Fundamental-Lower}, after using \eqref{Characteristic-Jacobian}, retain
	\begin{align*}
		2\beta\leqslant\xi\leqslant4\beta\ \ \mbox{and}\ \ \sqrt\xi\leqslant\eta\leqslant\frac65\sqrt\xi.
	\end{align*}
	It shows
	\begin{align*}
		s=\frac{\xi+\eta}{2}\approx\xi,\ \ \rho=\frac{\xi-\eta}{2}\approx\xi,\ \ \frac{\eta}{\sqrt s}\approx1.
	\end{align*}
	Proposition~\ref{Prop-Linear-Lower} for the linear part and positivity of the nonlinear term imply
	\begin{align}\label{First-Linear-Lower}
		w(s,\rho)\gtrsim\xi^{-\frac14}.
	\end{align}
	Moreover, $\tau\approx\beta^2$, $\rho\approx\beta\ll\tau$, $r\geqslant\rho$, and the characteristic margins in \eqref{Characteristic-Identities} satisfy the assumptions of Lemma~\ref{Lemma-Saturated-Kernel} leading to $D(\tau,r,\rho)\geqslant c_D$. The target point itself belongs to a fixed positive moving-front region, and hence $W_{\lin}(t,r)\geqslant0$ for large $B$.

	Using \eqref{First-Linear-Lower} and $\rho\approx\xi$, we obtain
	\begin{align*}
		\ml W(\alpha,\beta)\gtrsim\int_{2\beta}^{4\beta}\int_{\sqrt\xi}^{\frac65\sqrt\xi}\xi^{\frac{1-p}{2}}\xi^{-\frac p4}\dd\eta\dd\xi\gtrsim\int_{2\beta}^{4\beta}\xi^{1-\frac{3p}{4}}\dd\xi\gtrsim\beta^{2-\frac{3p}{4}}.
	\end{align*}
	This proves \eqref{First-Lower}.
\end{proof}

\begin{prop}[Parabolic-region iteration]\label{Prop-Parabolic-Iteration}
	Assume that, for some $A_j,B_j>0$ and $\nu_j\in\mb R$,
	\begin{align*}
		\ml W(\xi,\eta)\geqslant A_j\eta^{-\nu_j}
	\end{align*}
	whenever
	\begin{align}\label{Iteration-Region}
		\eta\geqslant B_j\ \ \mbox{and}\ \ \eta^2\leqslant\xi\leqslant2\eta^2.
	\end{align}
	Put $q_j:=p\nu_j+p-4$. Then there exist $A_{j+1},B_{j+1}>0$ such that
	\begin{align*}
		\ml W(\alpha,\beta)\geqslant A_{j+1}\times
		\begin{cases}
			\beta^{-\frac{q_j}{2}}&\mbox{if}\ \ q_j>0,\\
			\log\beta&\mbox{if}\ \ q_j=0,\\
			\beta^{-q_j}&\mbox{if}\ \ q_j<0,
		\end{cases}
	\end{align*}
	for $(\alpha,\beta)\in\ml P_{B_{j+1}}$. In particular, if $q_j>0$, then
	\begin{align}\label{Nu-Recurrence}
		\nu_{j+1}=\frac p2\nu_j+\frac{p-4}{2}.
	\end{align}
\end{prop}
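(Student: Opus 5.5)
The plan is to repeat the proof of Proposition~\ref{Prop-First-Lower}, with two changes. The linear lower bound for $w(s,\rho)$ is replaced by the inductive hypothesis. The source region becomes a whole family of parabolic strips $\eta^2\leqslant\xi\leqslant2\eta^2$, with $\eta$ ranging over a long interval. The three cases for $q_j$ should then come out of a single power integral $\int\eta^{-1-q_j}\dd\eta$.

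\textbf{Step 1 (choice of source region).} Fix $(\alpha,\beta)\in\ml P_B$ with $B$ large. In \eqref{Fundamental-Lower} I pass to the variables \eqref{Characteristic-Variables} via \eqref{Characteristic-Jacobian}. I drop $W_{\lin}(t,r)$, which is nonnegative for large $B$ because the target point lies in a positive moving-front region (Proposition~\ref{Prop-Linear-Lower}). By \eqref{D-Positive}, the $(\xi,\eta)$-integral may be restricted to
\begin{align*}
	\Omega_\beta:=\Bigl\{(\xi,\eta):\ \max\{B_j,C_0\sqrt\beta\}\leqslant\eta\leqslant c_0\beta,\ \ \eta^2\leqslant\xi\leqslant2\eta^2\Bigr\},
\end{align*}
with a large constant $C_0$ and a small constant $c_0$ (say $c_0\leqslant\frac1{4}$). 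This set lies inside \eqref{Iteration-Region}, so the hypothesis gives $|\ml W(\xi,\eta)|\geqslant A_j\eta^{-\nu_j}$ there.

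\textbf{Step 2 (kernel bound, the main obstacle).} The key check is that every point of $\Omega_\beta$ satisfies the hypotheses \eqref{Saturated-Geometry} of Lemma~\ref{Lemma-Saturated-Kernel}, so that $D(\tau,r,\rho)\geqslant c_D$. On $\Omega_\beta$ we have
\begin{align*}
	s=\tfrac{\xi+\eta}{2}\approx\eta^2\leqslant Cc_0^2\beta^2,\qquad t\approx\beta^2,\qquad \tau\approx\beta^2,\qquad \rho=\tfrac{\xi-\eta}{2}\approx\eta^2.
\end{align*}
Moreover $\tau\geqslant\beta^2/4$ if $c_0$ is small, so $\tau\geqslant T_D$ and $\sqrt\tau\in[\beta/2,C\beta]$. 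The conditions then follow one by one from \eqref{Characteristic-Identities}.
\begin{itemize}
	\item $\tau-(r-\rho)=\beta-\eta\in[\beta/2,\beta]$, which lies in $[\frac14\sqrt\tau,4\sqrt\tau]$. The lower bound uses $\sqrt\tau\leqslant\sqrt{t}\leqslant\beta(1+o(1))$, where $t\leqslant\frac{2\beta^2+\beta}{2}$.
	\item $r+\rho-\tau=\xi-\beta$ is nonnegative once $\eta^2\geqslant C_0^2\beta\geqslant\beta$. It is also at most $\xi\leqslant 4\rho$, since $\rho\geqslant(\xi-\sqrt\xi)/2$.
	\item $\rho\approx\eta^2\geqslant cC_0^2\beta\geqslant\frac18\sqrt\tau$, and $\rho\leqslant Cc_0^2\beta^2\leqslant\frac14\tau$.
	\item $r\geqslant\rho$ holds because $r\approx\beta^2\gg\rho$ for small $c_0$, and $s<t$ holds as well.
\end{itemize}
The delicate point is choosing $c_0$ and $C_0$ consistently with the fixed numerical constants of Lemma~\ref{Lemma-Saturated-Kernel}. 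That is where I expect to spend the effort; if some constant fails, I would shrink the $\xi$-strip slightly or adjust $c_0$.

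\textbf{Step 3 (integration).} On $\Omega_\beta$ we have $\rho^{\frac{1-p}{2}}\approx\eta^{1-p}$, and the $\xi$-strip has length $\eta^2$. Hence
\begin{align*}
	\ml W(\alpha,\beta)\gtrsim A_j^p\int_{C_0\sqrt\beta}^{c_0\beta}\eta^{1-p}\,\eta^{-p\nu_j}\,\eta^2\dd\eta=A_j^p\int_{C_0\sqrt\beta}^{c_0\beta}\eta^{-1-q_j}\dd\eta.
\end{align*}
The three cases follow directly from this integral.
\begin{itemize}
	\item If $q_j>0$, the lower limit dominates and gives $\gtrsim\beta^{-q_j/2}$, provided $B_{j+1}$ is large enough that $c_0\beta\geqslant 2C_0\sqrt\beta$ and $C_0\sqrt\beta\geqslant B_j$.
	\item If $q_j=0$, the integral is $\log\bigl(\tfrac{c_0}{C_0}\sqrt\beta\bigr)\gtrsim\log\beta$.
	\item If $q_j<0$, the upper limit dominates and gives $\gtrsim\beta^{-q_j}$.
\end{itemize}
In the case $q_j>0$ we may set $\nu_{j+1}=q_j/2=\frac p2\nu_j+\frac{p-4}{2}$, which is \eqref{Nu-Recurrence}.
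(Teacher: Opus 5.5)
Your proposal is correct and follows the paper's proof essentially step for step. You drop the nonnegative linear term, restrict to a band $C_0\sqrt\beta\leqslant\eta\leqslant c_0\beta$ of parabolic strips on which Lemma~\ref{Lemma-Saturated-Kernel} gives $D\geqslant c_D$, and evaluate $\int\eta^{-1-q_j}\dd\eta$. The only differences are cosmetic: the paper takes $4\sqrt\beta\leqslant\eta\leqslant\beta/8$ and the thinner strip $\eta^2\leqslant\xi\leqslant\frac54\eta^2$. Your check of the constants in \eqref{Saturated-Geometry} with $c_0\leqslant\frac14$ and $C_0\geqslant1$ does go through.
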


\begin{proof}
	Fix $(\alpha,\beta)\in\ml P_B$ and retain
	\begin{align*}
		4\sqrt\beta\leqslant\eta\leqslant\frac\beta8\ \ \mbox{and}\ \ \eta^2\leqslant\xi\leqslant\frac54\eta^2.
	\end{align*}
	For large $B$, this region lies inside \eqref{Iteration-Region}. Moreover,
	\begin{align*}
		\tau\approx\beta^2\ \ \mbox{and}\ \ \rho\approx\eta^2\lesssim\frac{\beta^2}{64}\lesssim\tau,
	\end{align*}
	and the characteristic margins again satisfy Lemma~\ref{Lemma-Saturated-Kernel}. Thus, $D\geqslant c_D$, while the homogeneous term is nonnegative. Hence,
	\begin{align*}
		\ml W(\alpha,\beta)&\gtrsim A_j^p\int_{4\sqrt\beta}^{\beta/8}\int_{\eta^2}^{\frac54\eta^2}\eta^{1-p}\eta^{-p\nu_j}\dd\xi\dd\eta\gtrsim A_j^p\int_{4\sqrt\beta}^{\beta/8}\eta^{-q_j-1}\dd\eta.
	\end{align*}
	Evaluating this integral gives the three stated cases.
\end{proof}

Starting from $\nu_0=\frac{3p-8}{4}$ and applying Proposition~\ref{Prop-Parabolic-Iteration} while $q_j>0$, one has
\begin{align}\label{Nu-Explicit}
	\nu_j=\frac{4-p}{p-2}+\frac{p(3p-10)}{4(p-2)}\left(\frac p2\right)^j.
\end{align}
Since $2<p<\frac{10}{3}$, the second coefficient in \eqref{Nu-Explicit} is negative and $\nu_j\to-\infty$. Hence, after finitely many steps, $q_j\leqslant0$. Consequently, there exist $A_*,B_*>0$ such that
\begin{align}\label{Nondecaying-Lower}
	\ml W(\alpha,\beta)\geqslant A_*
\end{align}
for $(\alpha,\beta)\in\ml P_{B_*}$.

\begin{remark}
	At the critical power $p=\frac{10}{3}$, one has $\nu_0=\frac12$ and \eqref{Nu-Recurrence} gives $\nu_j\equiv\frac12$ and $q_j\equiv1$. Thus, the finite-step improvement disappears exactly at the critical exponent.
\end{remark}

\subsection{Critical iteration on moving shells}
For $t>0$, let us define the interval
\begin{align*}
	\Gamma_t:=\left[t-\frac98\sqrt t,t-\sqrt t\right].
\end{align*}
Proposition~\ref{Prop-Linear-Lower} yields $T_0,a_0>0$ such that
\begin{align}\label{Critical-Lower}
	w(t,r)\geqslant a_0t^{-\frac14}
\end{align}
for $t\geqslant T_0$ and $r\in\Gamma_t$.

\begin{lemma}[Separated-time kernel lower bound]\label{Lemma-Separated-Time}
	There exist $\kappa\in\left(0,\frac1{100}\right)$, $T_1\geqslant T_0$ and $c_1>0$ such that
	\begin{align}\label{Separated-Time-Lower}
		D(t-s,r,\rho)\geqslant c_1t^{-\frac14}s^{\frac12}
	\end{align}
	whenever $t\geqslant T_1$, $T_0\leqslant s\leqslant\kappa\sqrt t$, $r\in\Gamma_t$ and $\rho\in\Gamma_s$.
\end{lemma}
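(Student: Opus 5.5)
The idea is to avoid any delicate analysis of the angular Jacobian. Since $s\leqslant\kappa\sqrt t$, the whole range $[|r-\rho|,r+\rho]$ of distances $\ell$ in \eqref{Angular-Representation} has length $2\rho\approx 2s\ll\sqrt t$. This whole range should therefore sit inside a single moving-front window for the time $\tau=t-s$, where Corollary~\ref{Coro-Front-Positive} gives $G_1^{(2)}(\tau,\ell)\gtrsim\tau^{-\frac34}$ uniformly. We then pull this lower bound out of the integral, and the remaining angular integral is exactly $2\pi$.

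\emph{Step 1: geometry.} Let $t\geqslant T_1$, $T_0\leqslant s\leqslant\kappa\sqrt t$, $r\in\Gamma_t$ and $\rho\in\Gamma_s$. Then $\rho\in[s-\frac98\sqrt s,s-\sqrt s]$, so $\rho\approx s$ once $T_0$ is large, and $\rho\leqslant s\leqslant\kappa\sqrt t\ll r$, so $r\geqslant\rho$. For $\ell\in[r-\rho,r+\rho]$ we write
\begin{align*}
	\tau-\ell=(t-r)-s+(r-\ell),\qquad |r-\ell|\leqslant\rho\leqslant s.
\end{align*}
Combining this with $t-r\in[\sqrt t,\frac98\sqrt t]$ gives
\begin{align*}
	\tau-\ell\in\bigl[\sqrt t-2\kappa\sqrt t,\ \tfrac98\sqrt t\bigr].
\end{align*}
Since $\sqrt\tau=\sqrt t\,(1+O(\kappa t^{-\frac12}))$, every such $\ell$ has the form $\ell=\tau-\sigma\sqrt\tau$ with $\sigma\in[\frac12,2]$ for $\kappa<\frac1{100}$ and $t$ large.

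\emph{Step 2: kernel bound.} Corollary~\ref{Coro-Front-Positive}, applied with $\sigma_-=\frac12$ and $\sigma_+=2$, gives
\begin{align*}
	G_1^{(2)}(\tau,\ell)\geqslant c_{\mathrm{fr}}\tau^{-\frac34}\gtrsim t^{-\frac34}\quad\text{for all }\ell\in[r-\rho,r+\rho],
\end{align*}
provided $\tau\geqslant t/2\geqslant T_{\mathrm{fr}}$. The circular kernel \eqref{Circular-Kernel} has a nonnegative integrand, so it suffices to bound that integrand from below over the full angular range:
\begin{align*}
	D(\tau,r,\rho)=\sqrt{r\rho}\int_0^{2\pi}G_1^{(2)}\bigl(\tau,\sqrt{r^2+\rho^2-2r\rho\cos\theta}\,\bigr)\dd\theta\geqslant 2\pi c\,t^{-\frac34}\sqrt{r\rho}.
\end{align*}
Equivalently, in the form \eqref{Angular-Representation}, the Jacobian integral $4\int_a^b\ell\,\dd\ell/\sqrt{(b^2-\ell^2)(\ell^2-a^2)}$ equals $2\pi$. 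We finally use $r\approx t$ and $\rho\approx s$ to get
\begin{align*}
	D(t-s,r,\rho)\gtrsim t^{-\frac34}t^{\frac12}s^{\frac12}=t^{-\frac14}s^{\frac12},
\end{align*}
which is \eqref{Separated-Time-Lower}.

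\emph{Main obstacle.} The only delicate point is Step 1. We must choose $\kappa$ and $T_1$ so that the interval $[r-\rho,r+\rho]$ lies uniformly inside the front window $[\tau-\sigma_+\sqrt\tau,\tau-\sigma_-\sqrt\tau]$. This requires two things. First, we must check that replacing $\sqrt t$ by $\sqrt\tau$ costs only $O(\kappa)$ in $\sigma$. Second, we must check that the shift $-s\pm\rho$, which has size at most $2\kappa\sqrt t$, keeps $\sigma$ bounded away from $0$. With $\kappa<\frac1{100}$ both hold with ample room.
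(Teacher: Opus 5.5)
Your proof is correct and is essentially the paper's argument. You bound $\tau-\ell=(t-r)-s+(r-\ell)$ for every $\ell\in[r-\rho,r+\rho]$, which is the paper's $\beta-\xi\leqslant\tau-\ell\leqslant\beta-\eta$ in characteristic variables, so that the whole range sits in a fixed moving-front window; you then apply Corollary~\ref{Coro-Front-Positive} and bound \eqref{Circular-Kernel} directly by $2\pi c\,\tau^{-\frac34}\sqrt{r\rho}\gtrsim t^{-\frac14}s^{\frac12}$.
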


\begin{proof}
	Let $\tau:=t-s$. As in the three-dimensional proof, the characteristic variables satisfy
	\begin{align*}
		\sqrt t\leqslant\beta\leqslant\frac98\sqrt t\ \ \mbox{and}\ \ \sqrt s\leqslant\eta\leqslant\frac98\sqrt s,
	\end{align*}
	and
	\begin{align*}
		2s-\frac98\sqrt s\leqslant\xi\leqslant2s-\sqrt s.
	\end{align*}
	For $s\leqslant\kappa\sqrt t$, one has $r\geqslant\rho$ for all sufficiently large $t$. Moreover, for every $\ell\in[r-\rho,r+\rho]$,
	\begin{align*}
		\beta-\xi\leqslant\tau-\ell\leqslant\beta-\eta.
	\end{align*}
	Choosing $\kappa$ sufficiently small, the whole interval $[r-\rho,r+\rho]$ lies in a fixed positive moving-front region for the kernel $G_1^{(2)}(\tau,\cdot)$. Corollary~\ref{Coro-Front-Positive} therefore yields
	\begin{align*}
		G_1^{(2)}(\tau,\ell)\gtrsim\tau^{-\frac34}
	\end{align*}
	for every such $\ell$. Returning directly to \eqref{Circular-Kernel}, we obtain
	\begin{align*}
		D(t-s,r,\rho)\gtrsim\sqrt{r\rho}\,\tau^{-\frac34}\gtrsim t^{-\frac14}s^{\frac12},
	\end{align*}
	which proves \eqref{Separated-Time-Lower}.
\end{proof}

We now introduce
\begin{align*}
	A_{\mathrm{crit}}(t):=t^{\frac14}\inf_{r\in\Gamma_t}w(t,r)\ \ \mbox{for}\ \ t\geqslant T_0.
\end{align*}
Under the parametrization $r=t-\sigma\sqrt t$ with $\sigma\in[1,\frac98]$, the continuity of $w$ implies $A_{\mathrm{crit}}\in\ml C\bigl([T_0,\infty)\bigr)$, while \eqref{Critical-Lower} gives $A_{\mathrm{crit}}(t)\geqslant a_0$.

\begin{prop}[Critical accumulation inequality]\label{Prop-Critical-Accumulation}
	There exist $c_+>0$ and $T_2\geqslant T_1$ such that
	\begin{align}\label{Critical-Accumulation}
		A_{\mathrm{crit}}(t)\geqslant a_0+c_+\int_{T_0}^{\kappa\sqrt t}\frac{[A_{\mathrm{crit}}(s)]^{\frac{10}{3}}}{s}\dd s\ \ \mbox{for}\ \ t\geqslant T_2.
	\end{align}
\end{prop}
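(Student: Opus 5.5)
We start from the lower-bound inequality \eqref{Fundamental-Lower} of Proposition~\ref{Prop-Half-Line-Representation} at a point $(t,r)$ with $r\in\Gamma_t$ and $t\geqslant T_2$. Two observations let us discard most of the Duhamel term. First, $D\geqslant0$ by \eqref{D-Positive}. Second, $|w|^p\geqslant0$. So we may restrict the double integral to $T_0\leqslant s\leqslant\kappa\sqrt t$ and $\rho\in\Gamma_s$.

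For the linear part, Proposition~\ref{Prop-Linear-Lower} with $[\sigma_-,\sigma_+]=[1,\frac98]$ gives
\begin{align*}
	W_{\lin}(t,r)\geqslant\bigl(M_1\min\Phi-o(1)\bigr)t^{-\frac14}.
\end{align*}
We choose $a_0$ (already fixed through \eqref{Critical-Lower}) not larger than $\frac12M_1\min_{[1,9/8]}\Phi$. This can be arranged by shrinking $a_0$ if necessary, since \eqref{Critical-Lower} remains valid for smaller $a_0$. Then $W_{\lin}(t,r)\geqslant a_0t^{-\frac14}$ for $t$ large. This gives the constant term $a_0$ after multiplication by $t^{\frac14}$.

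For the nonlinear part, Lemma~\ref{Lemma-Separated-Time} gives $D(t-s,r,\rho)\geqslant c_1t^{-\frac14}s^{\frac12}$ on the retained region. By the definition of $A_{\mathrm{crit}}$, for $\rho\in\Gamma_s$ we have
\begin{align*}
	w(s,\rho)\geqslant s^{-\frac14}A_{\mathrm{crit}}(s)>0,
\end{align*}
so $|w(s,\rho)|^p\geqslant s^{-\frac p4}[A_{\mathrm{crit}}(s)]^p$. Also $\rho\approx s$ on $\Gamma_s$, hence $\rho^{\frac{1-p}{2}}\gtrsim s^{\frac{1-p}{2}}$, and $|\Gamma_s|=\frac18\sqrt s$. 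Multiplying these factors, the inner $\rho$-integral is bounded below by a constant times
\begin{align*}
	t^{-\frac14}\,s^{\frac12}\cdot s^{\frac{1-p}{2}}\cdot s^{-\frac p4}\cdot s^{\frac12}\,[A_{\mathrm{crit}}(s)]^p=t^{-\frac14}s^{\frac32-\frac{3p}{4}}[A_{\mathrm{crit}}(s)]^p.
\end{align*}
At $p=\frac{10}{3}$ the exponent is $\frac32-\frac52=-1$, which produces exactly the weight $\frac1s$. Combining the two parts and taking the infimum over $r\in\Gamma_t$ gives \eqref{Critical-Accumulation}.

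The main obstacle is making Lemma~\ref{Lemma-Separated-Time} applicable uniformly, in particular at the lower end $s=T_0$. There $\rho\in\Gamma_s$ is small but positive, and we need $T_0\leqslant\kappa\sqrt t$, which just requires $t\geqslant T_2$ large. A further subtlety is that we used $\Gamma_s\subset(0,\infty)$ and $\rho\approx s$, which requires $T_0$ large enough that $s-\frac98\sqrt s\geqslant\frac s2$. This is harmless since $T_0$ may be enlarged. All remaining constants depend only on $c_1$, $p$ and the shell geometry.
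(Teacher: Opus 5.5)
Your proposal is correct and matches the paper's proof. You restrict \eqref{Fundamental-Lower} to $T_0\leqslant s\leqslant\kappa\sqrt t$ and $\rho\in\Gamma_s$, apply Lemma~\ref{Lemma-Separated-Time} together with $w(s,\rho)\geqslant s^{-\frac14}A_{\mathrm{crit}}(s)$, and count powers to obtain the weight $s^{-1}$ at $p=\frac{10}{3}$. Your observation that the constant term needs $W_{\lin}(t,r)\geqslant a_0t^{-\frac14}$, arranged by taking $a_0\leqslant c_{\lin}$, rather than just \eqref{Critical-Lower}, makes explicit a step the paper leaves implicit.
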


\begin{proof}
	Fix $t\geqslant T_2$ and $r\in\Gamma_t$. Restrict \eqref{Fundamental-Lower} to $T_0\leqslant s\leqslant\kappa\sqrt t$ and $\rho\in\Gamma_s$. By Lemma~\ref{Lemma-Separated-Time},
	\begin{align*}
		w(t,r)\geqslant a_0t^{-\frac14}+ct^{-\frac14}\int_{T_0}^{\kappa\sqrt t}s^{\frac12}\int_{\Gamma_s}\rho^{-\frac76}|w(s,\rho)|^{\frac{10}{3}}\dd\rho\dd s.
	\end{align*}
	For $\rho\in\Gamma_s$, we know
	\begin{align*}
		w(s,\rho)\geqslant s^{-\frac14}A_{\mathrm{crit}}(s),\ \ \rho\approx s,\ \ |\Gamma_s|=\frac18\sqrt s.
	\end{align*}
	Therefore, the lower bound is controlled by
	\begin{align*}
		s^{\frac12}\int_{\Gamma_s}\rho^{-\frac76}|w(s,\rho)|^{\frac{10}{3}}\dd\rho\gtrsim\frac{[A_{\mathrm{crit}}(s)]^{\frac{10}{3}}}{s}.
	\end{align*}
	Taking the infimum over $r\in\Gamma_t$ and multiplying by $t^{\frac14}$ proves \eqref{Critical-Accumulation}.
\end{proof}

We modify the classical slicing procedure in \cite{Agemi-Kurokawa-Takamura=2000,Palmieri-Takamura=2019,Wakasa-Yordanov=2019} exactly as in \cite[Section~7.2]{Chen-StronglyDamped-3D=2026}. For completeness, put $\theta:=\log_2\frac{10}{3}>1$. Starting from \eqref{Critical-Accumulation}, we first observe that
$A_{\mathrm{crit}}(t)\to\infty$ as $t\to\infty$. Indeed, since
$A_{\mathrm{crit}}(s)\geqslant a_0$ for $s\geqslant T_0$, one has, for
all sufficiently large $t$,
\begin{align*}
	A_{\mathrm{crit}}(t)
	\geqslant a_0+c_+a_0^{\frac{10}{3}}
	\log\left(\frac{\kappa\sqrt t}{T_0}\right).
\end{align*}
Choose $M_0>1$ sufficiently large so that $\gamma_0:=\log M_0+\frac37\log c_+>0$, and then choose $T_0^\sharp\geqslant\max\{T_0,T_2,1\}$ 
such that $A_{\mathrm{crit}}(t)\geqslant M_0$ for $t\geqslant T_0^\sharp$. For $j\geqslant0$, let us define
\begin{align*}
	T_{j+1}^\sharp:=\left(\frac{\mathrm{e}T_j^\sharp}{\kappa}\right)^2\ \ \mbox{and}\ \ M_{j+1}:=c_+M_j^{\frac{10}{3}}.
\end{align*}
We claim that
\begin{align*}
	A_{\mathrm{crit}}(t)\geqslant M_j\ \ \mbox{for}\ \ t\geqslant T_j^\sharp.
\end{align*}
The assertion holds for $j=0$ by the choice of $T_0^\sharp$. If it
holds for some $j\geqslant0$ and $t\geqslant T_{j+1}^\sharp$, then
$\kappa\sqrt t\geqslant\mathrm{e}T_j^\sharp$. Hence,
\eqref{Critical-Accumulation} and the induction hypothesis give
\begin{align*}
	A_{\mathrm{crit}}(t)\geqslant c_+\int_{T_j^\sharp}^{\mathrm{e}T_j^\sharp}\frac{[A_{\mathrm{crit}}(s)]^{\frac{10}{3}}}{s}\dd s\geqslant c_+M_j^{\frac{10}{3}}\int_{T_j^\sharp}^{\mathrm{e}T_j^\sharp}\frac{\dd s}{s}=M_{j+1}.
\end{align*}
This proves the claim by induction.

The two recurrences can be written explicitly as
\begin{align*}
	\log M_j&=\left(\frac{10}{3}\right)^j\gamma_0-\frac37\log c_+,\\
	\log T_j^\sharp&=2^j\left(\log T_0^\sharp+2\log\frac{\mathrm{e}}{\kappa}\right)-2\log\frac{\mathrm{e}}{\kappa}.
\end{align*}
Consequently, there exist $c_0,C_0>0$ such that
\begin{align*}
	\log M_j\geqslant c_0\left(\frac{10}{3}\right)^j\ \ \mbox{and}\ \ \log T_j^\sharp\leqslant C_0 2^j\ \ \mbox{for every}\ \ j\geqslant0.
\end{align*}
For a sufficiently large $t$, choose $j$ such
that $T_j^\sharp\leqslant t<T_{j+1}^\sharp$.
Since $\theta=\log_2\frac{10}{3}$, it follows that $2^{j\theta}\gtrsim(\log t)^\theta$.
Therefore,
\begin{align*}
	A_{\mathrm{crit}}(t)\geqslant M_j\geqslant\exp\bigl(c(\log t)^\theta\bigr).
\end{align*}
Recalling the definition of $A_{\mathrm{crit}}$, we conclude that
\begin{align}\label{Critical-Shell-Growth}
	w(t,r)\geqslant t^{-\frac14}\exp\bigl(c(\log t)^\theta\bigr)
\end{align}
for all sufficiently large $t$ and $r\in\Gamma_t$.

\subsection{Interior propagation and completion of the proof}

\begin{prop}[Interior lower bounds]\label{Prop-Interior-Lower}
	The following estimates hold in the strict interior region $\frac25t\leqslant r\leqslant\frac34t$.
	\begin{enumerate}[label=(\roman*)]
		\item If $2<p<\frac{10}{3}$ and $\mu_p:=\frac74-\frac p2$, then there exist $T_{\mathrm{int}},C_{\mathrm{int}}>0$ such that
		\begin{align}\label{Interior-Growth}
			w(t,r)\geqslant C_{\mathrm{int}}t^{\mu_p}\ \ \mbox{for}\ \ t\geqslant T_{\mathrm{int}}.
		\end{align}
		\item If $p=\frac{10}{3}$, then there exist $T_{\mathrm{crit}},C_{\mathrm{crit}},c_{\mathrm{crit}}>0$ such that
		\begin{align}\label{Critical-Interior-Growth}
			w(t,r)\geqslant C_{\mathrm{crit}}t^{-\frac34}\exp\bigl(c_{\mathrm{crit}}(\log t)^\theta\bigr)\ \ \mbox{for}\ \ t\geqslant T_{\mathrm{crit}}.
		\end{align}
	\end{enumerate}
\end{prop}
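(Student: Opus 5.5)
We propagate the lower bounds already available near the moving front into the strict interior region $\frac25t\leqslant r\leqslant\frac34t$. The tool is the fundamental inequality \eqref{Fundamental-Lower}, with the kernel estimated from below by the interior kernel bound of Lemma~\ref{Lemma-Interior-Kernel}. The linear part is handled by Lemma~\ref{Lemma-Interior-Decay}: $|W_{\lin}(t,r)|\leqslant Ct^{-\frac12}$ there. This is negligible against the growing nonlinear contribution, so it only needs to be absorbed at the end.

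\emph{Case (i), $2<p<\frac{10}{3}$.} Fix $(t,r)$ with $\frac25t\leqslant r\leqslant\frac34t$, and restrict the source integral to source points in the parabolic region $\ml P_{B_*}$, where \eqref{Nondecaying-Lower} gives $w(s,\rho)\geqslant A_*$. Concretely, take $s\in[\frac{t}{4},\frac{t}{3}]$ (say) and $\rho\in[s-2\sqrt s,s-\sqrt s]$, so that $\eta=s-\rho\approx\sqrt s$ and $\xi\approx 2s\approx\eta^2$ up to constants. After adjusting the constants in $\ml P_B$ (or simply redoing the iteration with $\beta^2\leqslant\alpha\leqslant C\beta^2$), this region lies inside the parabolic region. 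We then check the hypotheses of Lemma~\ref{Lemma-Interior-Kernel} with $T=t$ and $\tau=t-s\approx t$:
\begin{align*}
	\tau-(r-\rho)=t-s-r+\rho\geqslant t-\tfrac34t-2\sqrt s\gtrsim t,\qquad
	r+\rho-\tau\geqslant \tfrac25t+\tfrac t4-\tfrac34 t\ldots
\end{align*}
The second margin is the delicate one: with these numbers it is not positive. So I would choose the source window more carefully, for instance $s\in[\frac{t}{2}-\delta t,\frac t2]$, so that $\rho\approx s\approx t/2$ and
\begin{align*}
	\tau-(r-\rho)\approx t-\tfrac12t-\tfrac34t+\tfrac12t=\tfrac14t>0,\qquad r+\rho-\tau\geqslant\tfrac25t+\tfrac12t-\tfrac12t=\tfrac25t>0.
\end{align*}
Both margins are then $\gtrsim t$, $r\geqslant\rho$ requires $r\geqslant s$ (true since $r\geqslant\frac25t$ needs $s\leqslant\frac25t$; adjust to $s\in[\frac{3}{10}t,\frac25t]$ and recheck, where the margins become $\geqslant\frac{t}{20}$). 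Lemma~\ref{Lemma-Interior-Kernel} then gives $D\gtrsim t^{-\frac14}$. Hence
\begin{align*}
	w(t,r)\geqslant -Ct^{-\frac12}+ct^{-\frac14}\int_{s\approx t}\int_{\rho\in[s-2\sqrt s,s-\sqrt s]}\rho^{\frac{1-p}{2}}A_*^p\dd\rho\dd s\gtrsim t^{-\frac14}\cdot t\cdot t^{\frac12}\cdot t^{\frac{1-p}{2}}=t^{\frac74-\frac p2},
\end{align*}
which is \eqref{Interior-Growth}, since $\mu_p>0$ dominates the linear error.

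\emph{Case (ii), $p=\frac{10}{3}$.} Use the same window, replacing $A_*$ by the shell bound \eqref{Critical-Shell-Growth} on $\Gamma_s$. We have $|\Gamma_s|=\frac18\sqrt s$ and $|w|^{p}\geqslant s^{-\frac56}\exp(\frac{10}3c(\log s)^\theta)$ with $\log s=\log t+O(1)$. The integral is then
\begin{align*}
	t^{-\frac14}\cdot t\cdot t^{\frac12}\cdot t^{-\frac76}\cdot t^{-\frac56}\exp\bigl(c'(\log t)^\theta\bigr)=t^{-\frac34}\exp\bigl(c'(\log t)^\theta\bigr),
\end{align*}
which beats $t^{-\frac12}$ because $\theta>1$. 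This yields \eqref{Critical-Interior-Growth}.

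\emph{Main obstacle.} The main obstacle is the geometric bookkeeping. One must pick source windows so that both strict-interior margins in Lemma~\ref{Lemma-Interior-Kernel} hold uniformly and so that the sources sit in $\ml P_{B_*}$ (respectively on $\Gamma_s$). I expect this to need a small enlargement of the parabolic region's constants, which the iteration proofs tolerate.
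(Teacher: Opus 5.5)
Your argument is essentially the paper's: restrict \eqref{Fundamental-Lower} to sources with $s,\rho,\tau\approx t$ lying just behind the front at time $s$, bound $D\gtrsim t^{-1/4}$ via Lemma~\ref{Lemma-Interior-Kernel}, insert \eqref{Nondecaying-Lower} or \eqref{Critical-Shell-Growth}, and absorb $W_{\lin}=O(t^{-1/2})$ by Lemma~\ref{Lemma-Interior-Decay}. Your exponent counts $t^{\frac74-\frac p2}$ and $t^{-\frac34}\exp(c(\log t)^\theta)$ are correct. However, two of your explicit window choices are wrong as written. Both are easy to repair.

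\emph{The time window.} Your final window $s\in[\frac3{10}t,\frac25t]$ does not give $r+\rho-\tau\geqslant\frac t{20}$. Writing $\eta=s-\rho$, one has $r+\rho-\tau=r+2s-t-\eta$. At the corner $r=\frac25t$, $s=\frac3{10}t$ this equals $-\eta<0$, so the hypothesis of Lemma~\ref{Lemma-Interior-Kernel} fails there. The paper avoids this by taking $\frac34t\leqslant\xi\leqslant\frac45t$, i.e.\ $s$ essentially in $[\frac38t,\frac25t]$. There $r+\rho-\tau\geqslant\frac3{20}t-O(\sqrt t)$, and still $\rho<\frac25t\leqslant r$. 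Alternatively, you could keep your window and bypass the lemma. Since $(b^2-\ell^2)(\ell^2-a^2)\leqslant(2r\rho)^2$, the angular density in \eqref{Angular-Representation} is at least $2\ell/\sqrt{r\rho}\gtrsim1$. Moreover $[\tau-4\sqrt\tau,\tau-3\sqrt\tau]\subset[r-\rho,r+\rho]$ throughout your window, so Corollary~\ref{Coro-Front-Positive} still yields $D\gtrsim t^{-1/4}$.

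\emph{The radial window.} The range $\rho\in[s-2\sqrt s,s-\sqrt s]$ is not contained in $\ml P_{B_*}$. Since $\xi=2s-\eta$, the condition $\eta^2\leqslant\xi$ fails once $\eta$ exceeds roughly $\sqrt{2s}$. You do not need to enlarge the parabolic region and redo the iteration, which would require rechecking Lemma~\ref{Lemma-Saturated-Kernel} and the sign of the linear part in the enlarged region. Simply shrink the window, e.g.\ to $\rho\in\Gamma_s$. Then $s\leqslant\eta^2\leqslant\frac{81}{64}s<\xi<2s\leqslant2\eta^2$ for large $s$, and the same window serves both cases. The paper uses $\sqrt{3\xi/5}\leqslant\eta\leqslant\sqrt{4\xi/5}$ for the same purpose.
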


\begin{proof}
	Let us first consider the subcritical case. Fix $t$ sufficiently large and $\frac25t\leqslant r\leqslant\frac34t$. In characteristic variables, one retains
	\begin{align*}
		\frac34t\leqslant\xi\leqslant\frac45t\ \ \mbox{and}\ \ \sqrt{\frac{3\xi}{5}}\leqslant\eta\leqslant\sqrt{\frac{4\xi}{5}}.
	\end{align*}
	Then $\eta^2\leqslant\xi\leqslant2\eta^2$, and hence \eqref{Nondecaying-Lower} applies. Furthermore, $s\approx\rho\approx\tau\approx t$, while both characteristic margins are comparable to $t$. Lemma~\ref{Lemma-Interior-Kernel} therefore yields
	\begin{align*}
		D(t-s,r,\rho)\gtrsim t^{-\frac14}.
	\end{align*}
	Since the $\xi$-interval has length comparable to $t$, the $\eta$-interval has length comparable to $\sqrt t$, and $\rho\approx t$, we obtain
	\begin{align*}
		w(t,r)\geqslant W_{\lin}(t,r)+ct^{\frac74-\frac p2}.
	\end{align*}
	Lemma~\ref{Lemma-Interior-Decay} gives $|W_{\lin}(t,r)|\lesssim t^{-\frac12}$, which is negligible because $\mu_p>0$. This proves \eqref{Interior-Growth}.

	We next consider the critical case. Fix $t$ sufficiently large and $\frac25t\leqslant r\leqslant\frac34t$. Restrict \eqref{Fundamental-Lower} to
	\begin{align*}
		\frac38t\leqslant s\leqslant\frac25t\ \ \mbox{and}\ \ \rho\in\Gamma_s.
	\end{align*}
	Throughout this region, $s\approx\rho\approx\tau\approx t$ and the characteristic margins are comparable to $t$. Hence, Lemma~\ref{Lemma-Interior-Kernel} gives $D(t-s,r,\rho)\gtrsim t^{-\frac14}$. By \eqref{Critical-Shell-Growth}, one arrives at
	\begin{align*}
		|w(s,\rho)|^{\frac{10}{3}}\gtrsim s^{-\frac56}\exp\bigl(c(\log s)^\theta\bigr).
	\end{align*}
	Since $\rho^{-\frac76}\approx s^{-\frac76}$ and $|\Gamma_s|\approx\sqrt s$, we obtain
	\begin{align*}
		w(t,r)&\geqslant W_{\lin}(t,r)+ct^{-\frac14}\int_{\frac38t}^{\frac25t}s^{-\frac32}\exp\bigl(c(\log s)^\theta\bigr)\dd s\\
		&\geqslant W_{\lin}(t,r)+ct^{-\frac34}\exp\bigl(c_1(\log t)^\theta\bigr).
	\end{align*}
	By Lemma~\ref{Lemma-Interior-Decay}, $W_{\lin}(t,r)=O(t^{-\frac12})$. Since $\theta>1$, this term is negligible compared with the positive nonlinear contribution for sufficiently large $t$. This proves \eqref{Critical-Interior-Growth}.
\end{proof}

For $T>0$, set
\begin{align*}
	J_T:=\left[\frac T2,\frac{2T}{3}\right].
\end{align*}

\begin{lemma}[Short-time kernel mass]\label{Lemma-Local-Kernel-Mass}
	There exist $\kappa_0\in\left(0,\frac1{100}\right)$ and $T_0,c_0>0$ such that
	\begin{align}\label{Local-Kernel-Mass}
		\inf_{r\in J_T}\int_{J_T}D(\tau,r,\rho)\dd\rho\geqslant c_0\tau
	\end{align}
	whenever $T\geqslant T_0$ and $0<\tau\leqslant\kappa_0T$.
\end{lemma}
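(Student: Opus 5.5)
We want, for $r\in J_T$, $\int_{J_T}D(\tau,r,\rho)\dd\rho\gtrsim\tau$ when $\tau\leqslant\kappa_0T$. The natural approach is to undo the circular averaging. By \eqref{Circular-Kernel} and polar coordinates, for fixed $r$ and $x=re_1$,
\begin{align*}
	\int_{J_T}D(\tau,r,\rho)\dd\rho=\sqrt r\int_{\{T/2\leqslant|y|\leqslant 2T/3\}}|y|^{-\frac12}G_1^{(2)}(\tau,x-y)\dd y .
\end{align*}
Since $r,|y|\approx T$, this gives
\begin{align*}
	\int_{J_T}D(\tau,r,\rho)\dd\rho\gtrsim\int_{\{T/2\leqslant|y|\leqslant 2T/3\}}G_1^{(2)}(\tau,x-y)\dd y .
\end{align*}
We may drop nothing else: \eqref{G2-Positive} makes every piece of the integrand nonnegative. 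It therefore suffices to show that a fixed fraction of the mass $\int G_1^{(2)}(\tau,z)\dd z=\tau$ (Lemma~\ref{Lemma-Mass-Moment}) lies in the set $\{z: x-z\in \text{annulus}\}$.

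The difficulty is that $r$ may sit at an endpoint of $J_T$, e.g.\ $r=T/2$, so only part of a neighbourhood of $x$ lies in the annulus. I will handle this with a half-plane (or quarter-plane) argument based on symmetry. $G_1^{(2)}(\tau,\cdot)$ is radial, so the mass in any half-plane through the origin equals $\tau/2$. Intersecting with the ball $|z|\leqslant L$ for $L\ll T$, the set $\{x-z: z_1\leqslant 0 \text{ or } z_1\geqslant0 \text{ appropriately},\ |z|\leqslant L\}$ lies in the annulus whenever $r\in J_T$. Concretely, if $r\leqslant \frac{7T}{12}$ take $z_1\leqslant0$, so $|x-z|\geqslant r\geqslant T/2$ and $|x-z|\leqslant r+L\leqslant 2T/3$. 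Otherwise take $z_1\geqslant0$, so $|x-z|\leqslant\sqrt{r^2+L^2}\leqslant 2T/3$ for $r$ not at the top edge. A cleaner choice is the slab $\{|z_2|\leqslant L, z_1 \text{ of the right sign}\}$, checking $|x-z|\leqslant \sqrt{(r)^2+L^2}$ etc. With $L=cT$ and $c$ small, each configuration is fine. The delicate edge $r=2T/3$, $z_1\geqslant0$ needs $\sqrt{(r-z_1)^2+z_2^2}\leqslant r$, i.e.\ $z_2^2\leqslant 2rz_1-z_1^2$. Instead of a half-plane I therefore use the region $\{z_1\geqslant |z|^2/T\}$ roughly, or simply the quadrant cone $\{z_1\geqslant |z_2|\}\cap B_L$. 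That cone has angular fraction $1/4$, hence mass $\tau/4$ minus tails by radial symmetry, and inside it $|x-z|\leqslant r$ holds for $|z|\leqslant L\leqslant r$. The symmetric cone $\{z_1\leqslant-|z_2|\}$ handles the lower edge. These cones therefore work for every $r\in J_T$, with the choice made according to which half of $J_T$ contains $r$.

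It remains to remove the tail $|z|>L=cT$. By Chebyshev and \eqref{G2-Second-Moment},
\begin{align*}
	\int_{|z|>L}G_1^{(2)}(\tau,z)\dd z\leqslant L^{-2}\Bigl(2\tau^2+\tfrac23\tau^3\Bigr)\lesssim \tau\Bigl(\frac{\tau}{T^2}+\frac{\tau^2}{T^2}\Bigr)\lesssim \tau(\kappa_0/T+\kappa_0^2c^{-2}).
\end{align*}
Choosing $\kappa_0$ small relative to $c$ and $T\geqslant T_0$ large, the tail is at most $\tau/8$. The cone mass is then at least $\tau/4-\tau/8=\tau/8$, which gives \eqref{Local-Kernel-Mass} with $c_0\approx\frac18\sqrt{r}\,T^{-1/2}\cdot\min|y|^{...}$, a fixed constant.

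The main obstacle is the geometric step: verifying that the chosen cone, translated to $x$, stays inside the annulus uniformly for $r\in J_T$, including both endpoints. The Fubini rewriting of $D$ with the logarithmic singularity is routine, justified by Lemma~\ref{Lemma-Kernel-Identification}.
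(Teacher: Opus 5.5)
Your proposal is correct and is essentially the paper's proof. You unfold $D$ via polar coordinates into $\int_{\ml A_T}\sqrt{r/|y|}\,G_1^{(2)}(\tau,|x-y|)\dd y$, drop the weight, keep a fixed-aperture sector of the ball $\{|z|\leqslant\delta T\}$ pointing outward or inward according to which half of $J_T$ contains $r$, use radiality to get a fixed fraction of the ball mass, and remove the tail by Chebyshev with the second moment of Lemma~\ref{Lemma-Mass-Moment}. The only differences are cosmetic: you use the quarter cone $\{z_1\geqslant|z_2|\}$ (aperture $\pi/2$) where the paper uses $\cos\vartheta\geqslant\frac12$ (aperture $2\pi/3$), and your tail bound should read $\tau\bigl(2\kappa_0/(c^2T)+\frac23\kappa_0^2/c^2\bigr)$, which changes nothing.
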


\begin{proof}
	Let
	\begin{align*}
		\ml A_T:=\left\{y\in\mb R^2:\ \frac T2\leqslant|y|\leqslant\frac{2T}{3}\right\}.
	\end{align*}
	For $|x|=r\in J_T$, the definition of $D$ and polar coordinates give
	\begin{align*}
		\int_{J_T}D(\tau,r,\rho)\dd\rho=\int_{\ml A_T}\sqrt{\frac r{|y|}}G_1^{(2)}(\tau,|x-y|)\dd y\gtrsim\int_{\ml A_T}G_1^{(2)}(\tau,|x-y|)\dd y.
	\end{align*}
	Let us choose $0<\delta<\frac1{24}$ and write $a=T/2$, $b=2T/3$. Let $q\in(0,\delta T]$. If $r\leqslant(a+b)/2$, then every direction satisfying $\cos\vartheta\leqslant-1/2$ gives
	\begin{align*}
		a\leqslant|x-q\omega_\vartheta|\leqslant r+q<b.
	\end{align*}
	If $r\geqslant(a+b)/2$, then every direction satisfying $\cos\vartheta\geqslant1/2$ gives
	\begin{align*}
		a<r-q\leqslant|x-q\omega_\vartheta|\leqslant b.
	\end{align*}
	In either case, the set of admissible angles has measure at least $2\pi/3$, uniformly in $r,q$ and $T$. Using polar coordinates centered at $x$, together with radiality and positivity of $G_1^{(2)}$, we obtain
	\begin{align*}
		\int_{\ml A_T}G_1^{(2)}(\tau,|x-y|)\dd y\geqslant\frac13\int_{|z|\leqslant\delta T}G_1^{(2)}(\tau,z)\dd z.
	\end{align*}
	By Lemma~\ref{Lemma-Mass-Moment} and Chebyshev's inequality,
	\begin{align*}
		\int_{|z|>\delta T}G_1^{(2)}(\tau,z)\dd z\leqslant\frac{2\tau^2+\frac23\tau^3}{\delta^2T^2}.
	\end{align*}
	If $0<\tau\leqslant\kappa_0T$, then the right-hand side is at most $\frac12\tau$ after first choosing $\kappa_0$ sufficiently small and then $T_0$ sufficiently large. Since the total mass equals $\tau$, \eqref{Local-Kernel-Mass} follows.
\end{proof}

\begin{proof}[Proof of Theorem~\ref{Thm-Blowup}]
	Assume, to the contrary, that the maximal mild Sobolev solution is global in time. Let $\kappa_0$ be given by Lemma~\ref{Lemma-Local-Kernel-Mass} and, for sufficiently large $T$, set
	\begin{align*}
		Q_T:=[T,(1+\kappa_0)T]\times J_T.
	\end{align*}
	By Proposition~\ref{Prop-Interior-Lower}, one has
	\begin{align}\label{Frozen-Lower}
		w(t,r)\geqslant A_T
	\end{align}
	for $(t,r)\in Q_T$, where
	\begin{align}\label{Frozen-Amplitude}
		A_T:=
		\begin{cases}
			C_{\mathrm{int}}T^{\mu_p}&\mbox{if}\ \ 2<p<\frac{10}{3},\\
			C_{\mathrm{crit}}T^{-\frac34}\exp\bigl(c_{\mathrm{crit}}(\log T)^\theta\bigr)&\mbox{if}\ \ p=\frac{10}{3}.
		\end{cases}
	\end{align}
	The source regions used in the proof of Proposition~\ref{Prop-Interior-Lower} satisfy $s<T/2$ throughout $Q_T$ for large $T$, and hence they are disjoint from $[T,t]\times J_T$. Define
	\begin{align*}
		h_T(t):=\inf_{r\in J_T}w(t,r)\ \ \mbox{for}\ \ T\leqslant t\leqslant(1+\kappa_0)T.
	\end{align*}
	The joint continuity of $w$ and compactness of $J_T$ give $h_T\in\ml C\bigl([T,(1+\kappa_0)T]\bigr)$, while \eqref{Frozen-Lower} gives $h_T(t)\geqslant A_T$. Retaining $[T,t]\times J_T$ in \eqref{Fundamental-Lower}, using $\rho^{\frac{1-p}{2}}\gtrsim T^{\frac{1-p}{2}}$ on $J_T$, and applying Lemma~\ref{Lemma-Local-Kernel-Mass}, we obtain
	\begin{align*}
		h_T(t)\geqslant A_T+cT^{\frac{1-p}{2}}\int_T^t(t-s)[h_T(s)]^p\dd s.
	\end{align*}
	Let $y$ be the maximal solution of
	\begin{align*}
		y''=cT^{\frac{1-p}{2}}y^p\ \ \mbox{with}\ \ y(T)=A_T,\ \  y'(T)=0.
	\end{align*}
	A standard Volterra comparison gives $h_T(t)\geqslant y(t)$ throughout their common interval of existence. Multiplication by $y'$ yields
	\begin{align}\label{Unified-Delay}
		\tau_{\mathrm b}\lesssim T^{\frac{p-1}{4}}A_T^{-\frac{p-1}{2}}.
	\end{align}
	\begin{itemize}
		\item If $2<p<\frac{10}{3}$, then \eqref{Frozen-Amplitude} and $\mu_p=\frac74-\frac p2$ imply
		\begin{align*}
			\frac{\tau_{\mathrm b}}{T}\lesssim T^{\frac{2p^2-7p-3}{8}}\rightarrow0,
		\end{align*}
		because the exponent is negative on $\left(2,\frac{10}{3}\right]$.
		\item If $p=\frac{10}{3}$, then \eqref{Unified-Delay} gives
		\begin{align*}
			\frac{\tau_{\mathrm b}}{T}\lesssim T^{\frac{11}{24}}\exp\bigl(-c(\log T)^\theta\bigr)\rightarrow0
		\end{align*}
		since $\theta>1$.
	\end{itemize}
	  Hence, in both cases, $\tau_{\mathrm b}<\kappa_0T$ for sufficiently large $T$. The comparison solution blows up before $(1+\kappa_0)T$, contradicting the continuity of $h_T$ on the whole interval. This completes the proof.
\end{proof}

\appendix

\section{Uniformity in the wave-front region}\label{Appendix-Front}

We give the details omitted in Proposition~\ref{Prop-Front-Asymptotics} concerning the moving square-root singularity. Let $K\subset\subset\mb R$ and put $\varepsilon=t^{-\frac12}$. With $h=\sigma+z_1$, we write
\begin{align*}
	Q_t=2h-\varepsilon(h^2+z_2^2).
\end{align*}
If $|\varepsilon z_2|<1$, then
\begin{align*}
	Q_t=\varepsilon(h_+-h)(h-h_-)\ \ \mbox{and}\ \ h_\pm=\frac{1\pm\sqrt{1-\varepsilon^2z_2^2}}{\varepsilon}.
\end{align*}
Moreover,
\begin{align*}
	h_-=\frac{\varepsilon z_2^2}{1+\sqrt{1-\varepsilon^2z_2^2}}\ \ \mbox{and}\ \ h_+\approx\frac2\varepsilon
\end{align*}
on bounded $z_2$-sets. The region $|z_2|\geqslant(2\varepsilon)^{-1}$ contributes $O(\varepsilon^{-N}\mathrm{e}^{-c\varepsilon^{-2}})$ for some $N>0$, and is therefore negligible. On $|z_2|\leqslant(2\varepsilon)^{-1}$, the part $h\geqslant\varepsilon^{-1}$ is again exponentially small because $|h-\sigma|\gtrsim\varepsilon^{-1}$ uniformly for $\sigma\in K$.

For the remaining part, set $q=h-h_-$. Then
\begin{align*}
	Q_t=q\bigl(2d_\varepsilon-\varepsilon q\bigr)\ \ \mbox{with}\ \ d_\varepsilon:=\sqrt{1-\varepsilon^2z_2^2},
\end{align*}
and $d_\varepsilon\geqslant\frac{\sqrt3}{2}$. Hence, we get $Q_t^{-\frac12}\lesssim q^{-\frac12}$.
For $\sigma$ in a fixed compact interval,
\begin{align*}
	\mathrm{e}^{-\frac{(h_-+q-\sigma)^2}{2}}q^{-\frac12}\lesssim_K q^{-\frac12}\mathbf1_{\{q\leqslant C_K\}}+\mathrm{e}^{-cq^2},
\end{align*}
which is integrable in $q$ and, after multiplication by $\mathrm{e}^{-z_2^2/2}$, in $(q,z_2)$. Since $h_-\to0$ and $d_\varepsilon\to1$ uniformly on compact subsets, dominated convergence proves uniform convergence to the profile $\Phi$ on $K$.

\section{High-frequency local singularity}\label{Appendix-High-2D}

For completeness, we record that the high-frequency velocity kernel has only a logarithmic singularity at the spatial origin. Indeed,
\begin{align*}
	G_1^{(2),\high}(t,r)=\frac{1}{2\pi}\int_{R_0}^\infty\chi_{\high}(\rho)\widehat S_1(t,\rho)J_0(r\rho)\rho\dd\rho.
\end{align*}
For $0<r\leqslant1$, split the integral at $r^{-1}$. On $[R_0,r^{-1}]$, one may use $|J_0|\leqslant1$ and $|\widehat S_1|\lesssim\rho^{-2}$ to obtain $C(1+|\log r|)$. On $[r^{-1},\infty)$, one may use $|J_0(r\rho)|\lesssim(r\rho)^{-\frac12}$ to obtain a uniform constant. Thus, we find
\begin{align*}
	|G_1^{(2),\high}(t,r)|\lesssim1+|\log r|.
\end{align*}
If $r=\rho$ in the circular kernel, the distance is $2r|\sin(\theta/2)|\approx r|\theta|$ near $\theta=0$, and therefore the induced singularity is $|\log|\theta||$, which is integrable. This justifies the pointwise circular representations used in Section~\ref{Section-Kernels}.

\section*{Acknowledgments}
Wenhui Chen is supported in part by the National Natural Science Foundation of China (grant No.~12301270) and the Guangdong Basic and Applied Basic Research Foundation (grant No.~2025A1515010240).


\begin{thebibliography}{99}

\bibitem{Agemi-Kurokawa-Takamura=2000}
\newblock R. Agemi, Y. Kurokawa, H. Takamura.
\newblock Critical curve for $p$-$q$ systems of nonlinear wave equations in three space dimensions.
\newblock \emph{J. Differential Equations} \textbf{167} (2000), 87--133.

\bibitem{Aronszajn-Smith=1961}
\newblock N. Aronszajn, K. T. Smith.
\newblock Theory of Bessel potentials. I.
\newblock \emph{Ann. Inst. Fourier (Grenoble)} \textbf{11} (1961), 385--475.

\bibitem{Chen-StronglyDamped-3D=2026}
\newblock W. Chen.
\newblock The critical exponent for the three-dimensional semilinear wave equation with strong damping.
\newblock \emph{preprint} (2026). arXiv:2608.02278.

\bibitem{Chen-Girardi=2025}
\newblock W. Chen, G. Girardi.
\newblock Sharp lifespan estimates for semilinear fractional evolution equations with critical nonlinearity.
\newblock \emph{J. Differential Equations} \textbf{443} (2025), Paper No. 113568, 37 pp.

\bibitem{DAbbicco=2026}
\newblock M. D'Abbicco.
\newblock The critical exponent for semilinear wave equations with damped oscillations.
\newblock \emph{preprint} (2026). arXiv:2608.11722.

\bibitem{DAbbicco-Ebert=2014}
\newblock M. D'Abbicco, M. R. Ebert.
\newblock Diffusion phenomena for the wave equation with structural damping in the $L^p$--$L^q$ framework.
\newblock \emph{J. Differential Equations} \textbf{256} (2014), 2307--2336.

\bibitem{DAbbicco-Ebert=2017}
\newblock M. D'Abbicco, M. R. Ebert.
\newblock A new phenomenon in the critical exponent for structurally damped semi-linear evolution equations.
\newblock \emph{Nonlinear Anal.} \textbf{149} (2017), 1--40.

\bibitem{DAbbicco-Ebert=2022}
\newblock M. D'Abbicco, M. R. Ebert.
\newblock The critical exponent for semilinear $\sigma$-evolution equations with a strong non-effective damping.
\newblock \emph{Nonlinear Anal.} \textbf{215} (2022), Paper No. 112637, 26 pp.

\bibitem{DAbbicco-Lagioia=2025}
\newblock M. D'Abbicco, A. Lagioia.
\newblock $L^p$--$L^q$ estimates for very strongly damped wave equations.
\newblock In: \emph{New Tools in Mathematical Analysis and Applications}, Springer, Cham, 2025, 281--293.

\bibitem{DAbbicco-Reissig=2014}
\newblock M. D'Abbicco, M. Reissig.
\newblock Semilinear structural damped waves.
\newblock \emph{Math. Methods Appl. Sci.} \textbf{37} (2014), 1570--1592.

\bibitem{Dao-Reissig=2019}
\newblock T. A. Dao, M. Reissig.
\newblock $L^1$ estimates for oscillating integrals and their applications to semi-linear models with $\sigma$-evolution like structural damping.
\newblock \emph{Discrete Contin. Dyn. Syst.} \textbf{39} (2019), 5431--5463.

\bibitem{Duong-Dao-Reissig=2025}
\newblock D. V. Duong, T. A. Dao, M. Reissig.
\newblock Critical curve for a weakly coupled system of semi-linear $\sigma$-evolution equations with different damping types.
\newblock \emph{J. Evol. Equ.} \textbf{25} (2025), Paper No. 30, 28 pp.

\bibitem{Fino-Hamza=2022}
\newblock A. Z. Fino, M. A. Hamza.
\newblock Blow-up of solutions to semilinear wave equations with a time-dependent strong damping.
\newblock \emph{Evol. Equ. Control Theory} \textbf{11} (2022), 1955--1966.

\bibitem{Hanyga-Seredynska=2010}
\newblock A. Hanyga, M. Seredy\'nska.
\newblock Positivity of Green's functions for a class of partial integro-differential equations including viscoelasticity.
\newblock \emph{Wave Motion} \textbf{47} (2010), 648--662.

\bibitem{Ikehata=2014}
\newblock R. Ikehata.
\newblock Asymptotic profiles for wave equations with strong damping.
\newblock \emph{J. Differential Equations} \textbf{257} (2014), 2159--2177.

\bibitem{Ikehata-Onodera=2017}
\newblock R. Ikehata, M. Onodera.
\newblock Remarks on large time behavior of the $L^2$-norm of solutions to strongly damped wave equations.
\newblock \emph{Differential Integral Equations} \textbf{30} (2017), 505--520.

\bibitem{Ikehata-Takeda=2017}
\newblock R. Ikehata, H. Takeda.
\newblock Critical exponent for nonlinear wave equations with frictional and viscoelastic damping terms.
\newblock \emph{Nonlinear Anal.} \textbf{148} (2017), 228--253.

\bibitem{Ikehata-Takeda=2026}
\newblock R. Ikehata, H. Takeda.
\newblock On the $L^2$ estimates of the diffusion waves.
\newblock \emph{preprint} (2026). arXiv:2605.20557.

\bibitem{Ikehata-Todorova-Yordanov=2013}
\newblock R. Ikehata, G. Todorova, B. Yordanov.
\newblock Wave equations with strong damping in Hilbert spaces.
\newblock \emph{J. Differential Equations} \textbf{254} (2013), 3352--3368.

\bibitem{Jleli-Samet-Vetro=2021}
\newblock M. Jleli, B. Samet, C. Vetro.
\newblock A general nonexistence result for inhomogeneous semilinear wave equations with double damping and potential terms.
\newblock \emph{Chaos Solitons Fractals} \textbf{144} (2021), Paper No. 110673, 6 pp.

\bibitem{John=1979}
\newblock F. John.
\newblock Blow-up of solutions of nonlinear wave equations in three space dimensions.
\newblock \emph{Manuscripta Math.} \textbf{28} (1979), no. 1-3, 235--268.

\bibitem{Kainane-Kainane-Reissig=2020}
\newblock M. Kainane Mezadek, M. Kainane Mezadek, M. Reissig.
\newblock Semilinear wave models with friction and viscoelastic damping.
\newblock \emph{Math. Methods Appl. Sci.} \textbf{43} (2020), 3117--3147.

\bibitem{Kirane-Fino-Kerbal-Laadhari=2024}
\newblock M. Kirane, A. Z. Fino, S. Kerbal, A. Laadhari.
\newblock Non-existence of global weak solutions to semi-linear wave equations involving time-dependent structural damping terms.
\newblock \emph{Appl. Comput. Math.} \textbf{23} (2024), 110--129.

\bibitem{Michihisa=2021}
\newblock H. Michihisa.
\newblock Optimal leading term of solutions to wave equations with strong damping terms.
\newblock \emph{Hokkaido Math. J.} \textbf{50} (2021), 165--186.

\bibitem{Palmieri-Takamura=2019}
\newblock A. Palmieri, H. Takamura.
\newblock Blow-up for a weakly coupled system of semilinear damped wave equations in the scattering case with power nonlinearities.
\newblock \emph{Nonlinear Anal.} \textbf{187} (2019), 467--492.

\bibitem{Pham-Kainane-Reissig=2015}
\newblock D. T. Pham, M. Kainane Mezadek, M. Reissig.
\newblock Global existence for semi-linear structurally damped $\sigma$-evolution models.
\newblock \emph{J. Math. Anal. Appl.} \textbf{431} (2015), 569--596.

\bibitem{Schilling-Song-Vondracek=2012}
\newblock R. L. Schilling, R. Song, Z. Vondra\v{c}ek.
\newblock \emph{Bernstein Functions: Theory and Applications}, second ed.
\newblock De Gruyter Studies in Mathematics, vol. 37, Walter de Gruyter, Berlin, 2012.

\bibitem{Shibata=2000}
\newblock Y. Shibata.
\newblock On the rate of decay of solutions to linear viscoelastic equation.
\newblock \emph{Math. Methods Appl. Sci.} \textbf{23} (2000), 203--226.

\bibitem{Takamura-Wakasa=2011}
\newblock H. Takamura, K. Wakasa.
\newblock The sharp upper bound of the lifespan of solutions to critical semilinear wave equations in high dimensions.
\newblock \emph{J. Differential Equations} \textbf{251} (2011), 1157--1171.

\bibitem{Wakasa-Yordanov=2019}
\newblock K. Wakasa, B. Yordanov.
\newblock Blow-up of solutions to critical semilinear wave equations with variable coefficients.
\newblock \emph{J. Differential Equations} \textbf{266} (2019), 5360--5376.

\bibitem{Zhou=2001}
\newblock Y. Zhou.
\newblock Blow up of solutions to the Cauchy problem for nonlinear wave equations.
\newblock \emph{Chinese Ann. Math. Ser. B} \textbf{22} (2001), 275--280.

\end{thebibliography}
\end{document}